\date{}
\title{Graph functionality}
\author{Bogdan Alecu\thanks{Mathematics Institute, University of Warwick, Coventry, CV4 7AL, UK. B.Alecu@warwick.ac.uk} \and
Aistis Atminas\thanks{Department of Mathematics, London School of Economics, London WC2A 2AE, UK, A.Atminas@lse.ac.uk} \and 
Vadim Lozin\thanks{Mathematics Institute, University of Warwick, Coventry, CV4 7AL, UK. V.Lozin@warwick.ac.uk}
}
\tikzstyle{w_vertex}=[circle,fill=black!100,text=white,inner sep=0.4mm,draw]
\tikzstyle{vertex}=[circle,fill=black!100,text=white,inner sep=0.8mm]
\tikzstyle{point}=[circle,fill=black,inner sep=0.1mm]
\begin{document}
\maketitle

\newtheorem{theorem}{Theorem}
\newtheorem{obs}{Observation}
\newtheorem{lemma}{Lemma}
\newtheorem{proposition}{Proposition}
\newtheorem{prop}{Proposition}
\newtheorem{cor}{Corollary}
\newtheorem{definition}{Definition}
\newtheorem{remark}{Remark}
\newtheorem{conjecture}{Conjecture}
\newtheorem{problem}{Open problem}

\def\N{\mathbb{N}}
\def\t{\sim}
\def\nt{\nsim}
\def\1{n+1}
\def\2{n+2}

\begin{abstract}
Let $G=(V,E)$ be a graph and $A$ its adjacency matrix. We say that a vertex $y \in V$ is a function of vertices $x_1, \ldots, x_k \in V$ 
if there exists a Boolean function $f$ of $k$ variables such that for any vertex $z \in V - \{y, x_1, \ldots, x_k\}$,
$A(y,z)=f(A(x_1,z),\ldots,A(x_k,z))$. The functionality $fun(y)$ of vertex $y$ is the minimum $k$ such that $y$ is a function of $k$ vertices. 
The functionality $fun(G)$ of the graph $G$ is $\max\limits_H\min\limits_{y\in V(H)}fun(y)$, where the maximum is taken over all induced subgraphs $H$ of $G$.  
In the present paper, we show that functionality generalizes simultaneously several other graph parameters, such as degeneracy or clique-width, by proving that 
bounded degeneracy or bounded clique-width imply bounded functionality. Moreover, we show that this generalization is proper by revealing classes of graphs of
unbounded degeneracy and clique-width, where functionality is bounded by a constant. This includes permutation graphs, unit interval graphs and line graphs. 
We also observe that bounded functionality implies bounded VC-dimension, i.e. graphs of bounded VC-dimension extend graphs of bounded functionality, and this extension is also proper. 
\end{abstract}

{\em Keywords:} clique-width, graph degeneracy, VC-dimension, permutation graph, line graph, graph representation

%%%%%%%%%%%%%%%%%%%%%%%%%%%%%%%%%%%%%%%%%%%%

\section{Introduction}

%%%%%%%%%%%%%%%%%%%%%%%%%%%%%%%%%%%%%%%%%%%%
Let $G=(V,E)$ be a simple graph, i.e. undirected graph without loops and multiple edges.  
We denote by $A=A_G$ the adjacency matrix of $G$ and by $A(x,y)$ the element of this matrix
corresponding to vertices $x,y\in V$, i.e. $A(x,y)=1$ if $x$ and $y$ are adjacent, and $A(x,y)=0$ otherwise. 

We say that a vertex $y \in V$ is a function of vertices $x_1, \ldots, x_k \in V$ 
if there exists a Boolean function $f$ of $k$ variables such that for any vertex $z \in V - \{y, x_1, \ldots, x_k\}$,
$$A(y,z)=f(A(x_1,z),\ldots,A(x_k,z)).$$ 
The functionality $fun(y)$ of vertex $y$ is the minimum $k$ such that $y$ is a function of $k$ vertices. 
In particular, the functionality of an isolated vertex is $0$, and the same is true for a dominating (also known as universal) vertex, i.e. a vertex adjacent to 
all the other vertices of the graph. More generally, the functionality of a vertex $y$ does not exceed the number of its neighbours (the degree of $y$) and the number of its non-neighbours. 
One more simple example of functional vertices is given by twins, i.e. vertices $x$ and $y$ that have the same set of neighbours different from $x$ and $y$. 
Twins are functions of each other and their functionality is (at most) 1. 
    
\medskip
The functionality $fun(G)$ of $G$ is $$\max\limits_H\min\limits_{y\in V(H)}fun(y),$$ where the maximum is taken over all induced subgraphs $H$ of $G$.  
Similarly to many other graph parameters, this notion becomes valuable when its value is small, i.e. is bounded by a constant independent  of the size of the graph. 
This is important, in particular, for coding of graphs, i.e. representing them by words in a finite alphabet, which is needed for representing graphs in computer memory.
Indeed, if a vertex $y$ is a function of only constantly many vertices, then to describe the neighbourhood of $y$ we need $O(\log_2 n)$ bits, regardless of how large the neighbourhood (or non-neighbourhood)
of $y$ is.

In the present paper, we explore the relationship between graph functionality and other graph parameters.
From the above discussion, it follows that graphs of bounded functionality extend graphs of bounded vertex degree.
More generally, they extend graphs of bounded degeneracy, where the {\it degeneracy} of $G$ is the minimum $k$ such that every induced subgraph of $G$ has a vertex of degree at most $k$. 
A notion related to degeneracy is that of {\it arboricity}, which is the minimum number of forests into which the edges of $G$ can be partitioned.
The degeneracy of $G$ is always between the arboricity and twice the arboricity of $G$ and hence graphs of bounded functionality extend graphs of bounded arboricity too.

One more important graph parameter is {\it clique-width}. Many algorithmic problems that are generally NP-hard become polynomial-time solvable when restricted 
to graphs of bounded clique-width \cite{CMR00}. Clique-width is a relatively new notion and it generalizes another important graph parameter, {\ tree-width}, 
studied in the literature for decades. Clique-width is stronger than tree-width in the sense that graphs of bounded tree-width have bounded clique-width. 
In Section~\ref{sec:bounded}, we show that functionality is stronger than clique-width by proving that graphs of bounded clique-width have bounded functionality.
Moreover, in the same section we reveal three classes of graphs, where functionality is bounded but clique-width and degeneracy are not. These are permutation graphs, unit interval graphs and line graphs.  

In \cite{implicit}, it was shown that any class of graphs of bounded functionality contains  $2^{O(n\log_2 n)}$ labelled graphs with $n$ vertices. 
Therefore, functionality is unbounded in any larger class of graphs. In particular, it is unbounded in the classes of bipartite graphs, co-bipartite graphs and split graphs. 
In \cite{Lozin}, it was show that these are the only three minimal hereditary classes of graphs of unbounded VC-dimension.
Therefore, graphs of bounded VC-dimension extend graphs of bounded functionality. Moreover, this extension is proper, as we show in Section~\ref{sec:unbounded}.
Section~\ref{sec:conclusion} concludes the paper with a number of other open problems. In the rest of the present section we introduce basic terminology and notation used in the paper. 

For a simple graph $G$, we denote by $V(G)$ and $E(G)$ the vertex set and the edge set of $G$, respectively. 
The neighbourhood $N(v)$ of a vertex $v\in V(G)$ is the set of vertices of $G$ adjacent to $v$ and the degree of $v$ is $|N(v)|$.
A vertex of degree 0 is called {\it isolated}. 
The closed neighbourhood of $v$ is $N[v]=\{v\}\cup N(v)$.

A {\it clique} in a graph $G$ is a set of pairwise adjacent vertices, and an {\it independent set} is a 
set of pairwise non-adjacent vertices. The {\it girth} of $G$ is the length of a shortest cycle in $G$. 
A chordless cycle of length $n$ is denoted $C_n$. 
 
A {\it forest} is a graph without cycles and a {\it tree} is a connected graph without cycles. A graph $G$ is {\it bipartite} if
$V(G)$ can be partitioned into two independent sets,  {\it co-bipartite} if
$V(G)$ can be partitioned into two cliques, and {\it split} if $V(G)$ can be partitioned into a clique and an independent set.  

A graph $H$ is an {\it induced subgraph} of a graph $G$ if $H$ can be obtained from $G$ by vertex deletions. 
A class $X$ of graphs is {\it hereditary} if it is closed under taking induced subgraphs, or equivalently, if it is closed under deletion of vertices from graphs in the class.
A class $X$ is {\it monotone} if it is closed under vertex deletions and edge deletions, and $X$ is {\it minor-closed} if it is closed under vertex deletions, edge deletions and edge contractions.
Clearly, every minor-closed class is monotone and every monotone class is hereditary.

It is well-known (and not difficult to see) that a class $X$ of graphs is hereditary if and only if it can be described by means of minimal forbidden induced subgraphs,
i.e. vertex-minimal graphs that do not belong to $X$. If $M$ is the set of minimal forbidden induced subgraphs for $X$, then we say that graphs in $X$ are $M$-free,
and if $M$ is finite, we say that $X$ is {\it finitely defined}.  

By $A\otimes B$ we denote the symmetric difference of two sets, i.e. $A\otimes B=(A-B)\cup (B-A)$. When taking the symmetric difference of vertex neighbourhoods, 
we will always exclude the two vertices themselves; for brevity, we will write $N(u) \otimes N(v)$ to mean the set of vertices different from $u$ and $v$ and adjacent to 
exactly one of $u$ and $v$.

%%%%%%%%%%%%%%%%%%%%%%%%%%%%%%%%%%%%%%%%%%%%%%%%%%%%%%%%%%%%%%%%%%%%%%%%%%%%%%%%%%%%%%%%%%%%%%%%%%%%%%%%%%%%%%%%%%%%%%%
%%%%%%%%%%%%%%%%%%%%%%%%%%%%%%%%%%%%%%%%%%%%%%%%%%%%%%%%%

\section{Graphs of small functionality}
\label{sec:bounded}
%%%%%%%%%%%%%%%%%%%%%%%%%%%%%%%%%%%%%%%%%%%%%%%%%%%%%%%%%
%%%%%%%%%%%%%%%%%%%%%%%%%%%%%%%%%%%%%%%%%%%%%%%%%%%%%%%%%%%%%%%%%%%%%%%%%%%%%%%%%%%%%%%%%%%%%%%%%%%%%%%%%%%%%%%%%%%%%%%

As we mentioned in the introduction, functionality is bounded for graphs of bounded degree or degeneracy, which is easy to see. 
This includes, in particular, all proper minor-closed classes of graphs. The family of monotone classes is larger and not all
classes in this family are of bounded functionality. In this paper, we present a dichotomy  with respect to bounded/unbounded functionality
for monotone classes defined by finitely many forbidden induced subgraphs. The first part of the dichotomy describes monotone classes of bounded 
functionality without restriction to finitely defined classes and is presented in Section~\ref{sec:monotone}. 
The other part of the dichotomy applies to finitely defined monotone classes only and is presented in Section~\ref{sec:unbounded}.

\subsection{Monotone classes of bounded functionality}
\label{sec:monotone}

\begin{theorem}\label{thm:monotone-bounded}
If $X$ is a monotone class that does not contain at least one forest, then graphs in $X$ have bounded functionality. 
\end{theorem}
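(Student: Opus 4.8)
The plan is to show that under the hypothesis every graph in $X$ is degenerate with bounded degeneracy, and then invoke the bound of functionality by degree. Fix a forest $F \notin X$ and let $t = |V(F)|$. Since $X$ is monotone, it is closed under taking arbitrary subgraphs (both vertex and edge deletions), so no graph $G \in X$ can contain $F$ as a subgraph: otherwise, deleting the superfluous vertices and edges of $G$ would produce a copy of $F$ inside $X$, a contradiction. Moreover, because $X$ is monotone (hence hereditary), every induced subgraph of a graph in $X$ again lies in $X$ and is therefore also $F$-subgraph-free.

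The heart of the argument is a greedy \emph{embedding lemma}: every forest on $t$ vertices is a subgraph of any graph $G$ with minimum degree at least $t-1$. To prove it, I would order the vertices of $F$ so that each component is listed in a parent-first order (a BFS or DFS order from a chosen root in each tree) and embed them one at a time. The root of a component can be mapped to any not-yet-used vertex, which exists since $\delta(G) \ge t-1$ forces $|V(G)| \ge t$. A non-root vertex must be mapped to an unused neighbour of the image of its parent; that parent image has at least $t-1$ neighbours, while at most $t-2$ vertices have been used so far apart from it, so a free neighbour is always available. I expect this embedding step to be the only real content of the proof, and it is entirely standard; the rest is bookkeeping.

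Combining the two ingredients gives the conclusion. By the lemma in contrapositive form, any $F$-subgraph-free graph has a vertex of degree at most $t-2$; applying this to every induced subgraph shows that each $G \in X$ is $(t-2)$-degenerate. Finally, since the functionality of a vertex never exceeds its degree, every induced subgraph $H$ of $G$ contains a vertex $y$ with $fun(y) \le \deg_H(y) \le t-2$, whence $\min_{y \in V(H)} fun(y) \le t-2$ for every such $H$ and therefore $fun(G) \le t-2 = |V(F)|-2$. This constant depends only on the excluded forest $F$, so functionality is uniformly bounded across $X$, as required. Equivalently, one could simply quote the fact noted in the introduction that bounded degeneracy implies bounded functionality, but the direct degree bound already suffices and keeps the argument self-contained.
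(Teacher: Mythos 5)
Your proposal is correct and follows essentially the same route as the paper: exclude the forest $F$, show via a greedy embedding that large minimum degree would force $F$ as a subgraph, deduce bounded degeneracy, and conclude via the fact that functionality is at most degree. The only cosmetic difference is that you embed the forest component by component directly, whereas the paper embeds trees on at most $k+1$ vertices and derives the forest case from that; both yield the same conclusion.
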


\begin{proof}
Let $F$ be a forest that does not belong to $X$, and let $k$ be the number of vertices in $F$.  Assume that $X$ has a graph $G$ every vertex of which has degree at least $k$.
Then $G$ contains every tree $T$ with at most $k+1$ vertices as a (not necessarily induced) subgraph, which can be easily shown by induction on the number of vertices in $T$.   
But then $G$ contains $F$ as a subgraph, which contradicts the assumption that $X$ is a monotone class that does not contain $F$. This shows that every graph in $X$ 
contains a vertex of degree at most $k-1$. Since $X$ is hereditary, we conclude that the degeneracy of graphs in $X$ is at most $k-1$. Therefore, graphs in $X$ have bounded functionality.   
\end{proof}

\subsection{Graphs of bounded clique-width}

The notion of clique-width of a graph was introduced in \cite{CER93}. The clique-width of a graph $G$ is denoted ${\rm cwd}(G)$
and is defined as the minimum number of labels needed to construct $G$ by means of the following four graph operations: 
\begin{itemize}
\item
creation of a new vertex $v$ with 
label $i$ (denoted $i(v)$), 
\item disjoint union of two labelled graphs $G$ and 
$H$ (denoted $G\oplus H$), 
\item connecting vertices with specified labels $i$ 
and $j$ (denoted $\eta_{i,j}$) and 
\item renaming label $i$ to label $j$ 
(denoted $\rho_{i\to j}$). 
\end{itemize}

Every graph can be defined by an algebraic expression using the four operations above. 
This expression is called a $k$-expression if it uses $k$ different labels.
For instance, the cycle $C_5$ on vertices $a,b,c,d,e$ 
(listed along the cycle) can be defined by the following 4-expression:
$$
\eta_{4,1}(\eta_{4,3}(4(e)\oplus\rho_{4\to 3}(\rho_{3\to 2}(\eta_{4,3}(4(d)\oplus\eta_{3,2}(3(c)\oplus\eta_{2,1}(2(b)\oplus 1(a)))))))).
$$

Alternatively, any algebraic expression defining $G$ can be represented as a rooted tree, 
whose leaves correspond to the operations of vertex creation, the internal nodes correspond 
to the $\oplus$-operations, and the root is associated with $G$. The operations $\eta$ and 
$\rho$ are assigned to the respective edges of the tree. Figure~\ref{fig:tree} shows the tree 
representing the above expression defining a $C_5$. 

\bigskip
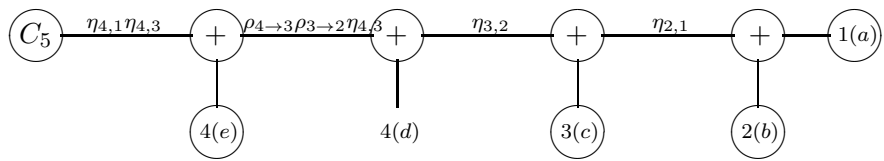
\begin{figure}[ht]
\begin{center}
\setlength{\unitlength}{0.32mm}
\begin{picture}(370,50)
\put(15,50){\circle{20}}
\put(90,50){\circle{20}}
\put(165,50){\circle{20}}
\put(240,50){\circle{20}}
\put(315,50){\circle{20}}
\put(355,50){\circle{20}}
\put(90,10){\circle{20}}
\put(240,10){\circle{20}}
\put(315,10){\circle{20}}
\put(25,50){\line(1,0){55}}
\put(100,50){\line(1,0){55}}
\put(175,50){\line(1,0){55}}
\put(250,50){\line(1,0){55}}
\put(325,50){\line(1,0){20}}
\put(90,40){\line(0,-1){20}}
\put(165,40){\line(0,-1){20}}
\put(240,40){\line(0,-1){20}}
\put(315,40){\line(0,-1){20}}
%%%%%%%
\put(85,47){+}
\put(160,47){+}
\put(235,47){+}
\put(311,47){+}
\put(8,47){$C_5$}
\put(83,10){$_{4(e)}$}
\put(158,10){$_{4(d)}$}
\put(233,10){$_{3(c)}$}
\put(308,10){$_{2(b)}$}
\put(348,50){$_{1(a)}$}
\put(101,55){$_{\rho_{4\to 3}\rho_{3\to 2}\eta_{4,3}}$}
\put(36,55){$_{\eta_{4,1}\eta_{4,3}}$}
\put(196,55){$_{\eta_{3,2}}$}
\put(271,55){$_{\eta_{2,1}}$}
\end{picture}
\end{center}
\caption{The tree representing the expression defining a $C_5$}
\label{fig:tree}
\end{figure}

Among various examples of graphs of bounded clique-width we mention distance-hereditary graphs. 
These are graphs of clique-width at most 3 \cite{perfect-cw}. Every graph in this class can be constructed 
from a single vertex by successively adding either a pendant vertex  or a twin (true or false) \cite{DH}.
From this characterization we immediately conclude that the functionality of distance-hereditary graphs is at most one. 
More generally, in the next theorem we show that functionality is bounded for all classes of graphs of bounded clique-width.

\begin{theorem}
For any graph $G$, $fun(G)\le 2{\rm cwd}(G)-1$.
\label{cwd}
\end{theorem}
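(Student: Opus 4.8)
The plan is to prove the slightly stronger statement that \emph{every} graph $G$ contains a vertex of functionality at most $2\,{\rm cwd}(G)-1$, and then to deduce the theorem. This reduction is immediate: since clique-width does not increase under taking induced subgraphs, every induced subgraph $H$ of $G$ satisfies ${\rm cwd}(H)\le {\rm cwd}(G)$, so $H$ contains a vertex of functionality at most $2\,{\rm cwd}(H)-1\le 2\,{\rm cwd}(G)-1$; feeding this into the definition $fun(G)=\max_H\min_{y}fun(y)$ yields the claimed bound. So I fix an optimal $k$-expression for $G$ with $k={\rm cwd}(G)$ labels together with its parse tree $T$, and for a node $t$ of $T$ I write $G_t$ for the labelled graph produced at $t$ and $\lambda_t$ for its labelling $V(G_t)\to\{1,\dots,k\}$.

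The engine of the argument is an observation on how labels control adjacencies. \textbf{Key Lemma.} If $u,v\in V(G_t)$ satisfy $\lambda_t(u)=\lambda_t(v)$, then $A(u,w)=A(v,w)$ for every vertex $w\notin V(G_t)$. Indeed, every edge joining a vertex of $G_t$ to a vertex outside $G_t$ is created by an $\eta$-operation situated \emph{above} $t$ in $T$, and such operations act only through current labels; since $u$ and $v$ share a label at $t$ and all operations above $t$ (in particular every relabelling $\rho$) treat them identically, they retain equal labels throughout the part of the expression above $t$ and are therefore joined to $w$ by exactly the same operations. In short, the restriction of a vertex's neighbourhood to the \emph{outside} of any subtree depends only on its current label, and so takes at most $k$ distinct values.

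With the Key Lemma in hand I would select $y$ and its representatives along the path $\pi$ from the root of $T$ down to a suitably chosen leaf. Writing $u_1,\dots,u_s$ for the $\oplus$-nodes on $\pi$ (from the leaf upwards) and $B_i$ for the sibling subtree split off at $u_i$, every vertex other than $y$ lies in exactly one $B_i$, namely the one attached at its lowest common $\oplus$-ancestor with $y$. For $z\in V(B_i)$ this vertex is external to the subtree containing $y$ at the moment of the merge, so by the Key Lemma $A(y,z)$ is governed entirely by $\lambda(z)$ at that merge; in particular two vertices of $B_i$ carrying equal labels there have equal adjacency to $y$. The representatives are then built from two ingredients: one representative per occupied label class, so that via the Key Lemma the adjacency of $y$ to any vertex lying \emph{outside} a given subtree can be read off from a same-label partner of $y$; together with the representatives needed to resolve the adjacencies of $y$ \emph{inside} the part of the tree where it was created. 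Balancing these two roles is what produces the factor two, while the ``$-1$'' reflects that the representative handling $y$'s own label class can be reused for the internal resolution.

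The step I expect to be the main obstacle is assembling a \emph{single} Boolean function $f$ from these partial prescriptions without exceeding the budget of $2k-1$ representatives. The danger is that a vertex $z$ on $y$'s side of a merge and a vertex $z'$ inside some $B_i$ may present the same pattern of adjacencies to the chosen representatives while having opposite adjacency to $y$; ruling this out requires choosing the representatives so that their joint adjacency pattern always records enough of $z$'s label history to pin down $A(y,z)$, and doing so uniformly across the unboundedly many merge levels $u_1,\dots,u_s$ using only the $k$-valued label information. Making this precise -- in particular verifying that one representative per label class on each relevant side suffices, and that $y$'s own class can be shared to save the final unit -- is the delicate combinatorial core. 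What keeps the total bounded by a function of $k$ alone is precisely the Key Lemma, which guarantees that only label information, never the unbounded number of vertices, is ever needed.
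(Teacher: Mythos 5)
Your reduction to finding a single low-functionality vertex and your Key Lemma (two vertices sharing a label at a node $t$ are indistinguishable from every vertex outside $G_t$) are both correct and are exactly the ingredients the paper uses. But the construction you build on top of them does not work and you have, in effect, flagged the reason yourself. Along a root-to-leaf path the number $s$ of $\oplus$-merges is unbounded, and the adjacency of $y$ to a vertex $z\in B_i$ depends on the label of $z$ at the $i$-th merge; to ``read off'' these adjacencies you would need representatives inside each $B_i$ separately, giving a budget of order $sk$ rather than $2k-1$. The Key Lemma only lets you transfer information from a \emph{same-label partner of $y$}, and your scheme never produces such a partner together with control over the vertices that could still distinguish the pair. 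The collision you worry about ($z$ and $z'$ with identical adjacency patterns to the representatives but opposite adjacency to $y$) is not a technicality to be smoothed over; it is fatal to the path-based construction as described.

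The missing idea is a different, much more local choice of where to look. Take a node $v$ of the parse tree whose subtree has more than $k$ leaves but none of whose children does, and let $i$ be minimal so that the first $i$ children of $v$ together contribute more than $k$ leaves; the resulting partial subtree $T'$ then has between $k+1$ and $2k$ leaves. By pigeonhole two of its leaves, $x$ and $y$, carry the same label at $v$, so by your own Key Lemma no vertex outside $T'$ distinguishes them; hence $N(x)\otimes N(y)$ is contained in the at most $2k-2$ remaining vertices of $T'$. Then $x$ is a function of $\{y\}\cup\bigl(N(x)\otimes N(y)\bigr)$ -- the Boolean function is simply projection onto the $y$-coordinate, since $A(x,z)=A(y,z)$ for every $z$ outside that set -- giving functionality at most $2k-1$ with no delicate assembly of partial prescriptions required. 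In short: instead of describing $N(y)$ from scratch via label classes, find a second vertex whose neighbourhood differs from $y$'s in at most $2k-2$ places.
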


\begin{proof}
Let $G$ be a graph of clique-width $k$ and let $T$ be a tree corresponding to a $k$-expression that describes $G$.
Consider a node $v$ of the tree such that the tree rooted at $v$ has more than $k$ leaves, and no children of $v$ have this property (if no such $v$ exists, we are done, since $G$ has at most $k$ vertices). 
Denote the children of $v$ by $u_1,\ldots, u_t$ and let $i$ be the minimum index such that $u_1, \ldots, u_i$ have a combined total of more than $k$ leaves amongst their descendants. 
Consider the subtree $T'$ of $T$ consisting of $v$ and the union of the trees rooted at $u_1, \ldots, u_i$. This subtree has more than $k$ leaves and therefore 
at least two of them, say $x$ and $y$, have the same label at node $v$. On the other hand, $T'$ has at most $2k$ leaves by the choice of $i$.  
Therefore, the symmetric difference $N(x)\otimes N(y)$ contains at most $2k-2$ vertices, since $x$ and $y$ are not distinguished outside of the tree rooted at $v$. 
As a result, the functionality of both $x$ and $y$ is at most $2k-1$ ($x$ is a function of $\{y\}\cup (N(x)\otimes N(y))$ and similarly $y$ is a function of $\{x\}\cup (N(x)\otimes N(y))$).  

It is known (see e.g. \cite{CO00}) that the clique-width of an induced subgraph of $G$ cannot exceed the clique-width of $G$. Therefore, every induced subgraph of $G$ 
has a vertex of functionality at most $2k-1$. Thus, the functionality of $G$ is at most $2k - 1$.
\end{proof}

This result shows that the family of graph classes of bounded functionality extends the family of graph classes of bounded clique-width. Moreover, this extension is proper,
because clique-width is known to be unbounded for square grids. This example, however, is not very interesting in the sense that  square grids have bounded vertex degree and hence bounded functionality. 
In the next three sections, we reveal several classes of graphs of bounded functionality, where neither clique-width nor degeneracy is bounded.

\subsection{Unit interval graphs}

A unit interval graph is the intersection graph of intervals of the same length on the real line. 
The class of unit interval graphs is one of the minimal hereditary classes of unbounded clique-width \cite{Lozin-minimal}. 
Also, degeneracy is unbounded in this class, since it contains cliques of arbitrarily large size. 
Our next result shows that functionality is bounded for unit interval graphs.  

\begin{theorem}\label{thm:unit-interval}
	The functionality of  unit interval graphs is at most 2.
\end{theorem}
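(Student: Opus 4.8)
The plan is to work with the \emph{unit interval order}. Sort the vertices $v_1,\ldots,v_n$ by the left endpoints of their intervals; since all intervals have equal length this order also sorts them by right endpoints, and a standard check shows that each closed neighbourhood $N[v_i]$ is a block of consecutive vertices $\{v_{a_i},\ldots,v_{b_i}\}$ with $a_i\le i\le b_i$, where both $a_i$ and $b_i$ are non-decreasing in $i$ and $\deg(v_i)=b_i-a_i$. As any induced subgraph of a unit interval graph is again a unit interval graph, it suffices to exhibit, in every such graph, one vertex of functionality at most $2$. Three reductions dispose of the easy cases: if the graph has a pair of twins (two consecutive vertices with $(a_i,b_i)=(a_{i+1},b_{i+1})$, whence $N[v_i]=N[v_{i+1}]$) their functionality is at most $1$; a vertex with $a_i=1,b_i=n$ is dominating, of functionality $0$; and any vertex of degree at most $2$ has functionality at most its degree. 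So I may assume the graph is twin-free, and in particular that the leftmost vertex $v_1$ is neither isolated nor dominating, i.e.\ $2\le p\le n-1$ where $p:=b_1$.

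The main construction concerns $y=v_1$. Here $a_1=1$ and $N[v_1]=\{v_1,\ldots,v_p\}$ is a clique, so the only thing to detect is the right boundary: for $z=v_j$ one has $v_1\sim z$ exactly when $j\le p$. Let $v_p$ be the rightmost neighbour of $v_1$; since $a_p=1$ we have $N[v_p]=\{v_1,\ldots,v_{b_p}\}$, and twin-freeness forces $b_p>p$ (otherwise $N[v_p]=N[v_1]$), so the ``overreach'' $\{v_{p+1},\ldots,v_{b_p}\}$ is non-empty. Now let $v_r$ be the rightmost neighbour of $v_{p+1}$, that is $r:=b_{p+1}$, and suppose $r>b_p$. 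Then $v_r$ is adjacent to every overreach vertex (as $b_j\ge b_{p+1}=r$ for $j\ge p+1$) but to none of $v_1,\ldots,v_p$ (as $b_k\le b_p<r$ for $k\le p$). Consequently $v_1\sim z$ holds precisely when $v_p\sim z$ \emph{and} $v_r\nsim z$: the factor $v_p\sim z$ kills everything past $b_p$, while $v_r\nsim z$ strips off exactly the overreach. Thus $v_1$ is the Boolean function $A(v_p,z)\wedge\neg A(v_r,z)$ of the two vertices $v_p,v_r$, giving $fun(v_1)\le 2$.

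The hard part is the degenerate case $b_{p+1}=b_p$, in which $v_p$ and $v_{p+1}$ share their rightmost neighbour. One checks that then \emph{no} vertex is adjacent to $v_{p+1}$ but not to $v_p$, so the right boundary of $v_1$ is matched by the boundary of no other vertex, and the construction above genuinely breaks down (the far region beyond $v_p$'s reach cannot be separated from the overreach with only two helpers). I would handle this by symmetry and reduction: the mirror-image construction applied to the rightmost vertex $v_n$ succeeds unless $v_n$ falls into the symmetric degenerate case $a_{q-1}=a_q$, where $N[v_n]=\{v_q,\ldots,v_n\}$. It therefore remains to treat a graph that is \emph{degenerate at both ends}; here I expect the monotonicity of $a_i$ and $b_i$ to be decisive. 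Following the right-reach chain $1\mapsto b_1\mapsto b_{b_1}\mapsto\cdots$, which is strictly increasing until $b$ saturates, one should reach either a dominating vertex or, after peeling off the saturated clique exposed at an end, a strictly smaller twin-free unit interval graph to which induction on $n$ applies—provided the inductive good vertex can be chosen away from the deleted end so that its defining identity survives in the full graph. Ruling out a simultaneous two-sided degeneracy that produces no twin, dominating vertex, or degree-$\le 2$ vertex, and verifying that the induction lifts, is the crux of the argument and the step I expect to be most delicate.
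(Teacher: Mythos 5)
Your setup (consecutive neighbourhoods with monotone $a_i,b_i$) and your non-degenerate construction $A(v_p,z)\wedge\neg A(v_r,z)$ are correct, but the proof is not complete: you yourself flag the two-sided degenerate case ($b_{p+1}=b_p$ at the left end and $a_{q-1}=a_q$ at the right end) as unresolved, and the escape route you sketch does not close it. First, nothing in your reductions rules out a twin-free, two-sided degenerate graph with minimum degree at least $3$ and no dominating vertex, so this is a genuine hole rather than a routine verification. Second, the proposed induction does not obviously lift: to bound $fun(G)$ you must exhibit in \emph{every} induced subgraph $H$ a vertex whose adjacency is determined by two other vertices \emph{of $H$, within $H$}; a good vertex found after peeling vertices off $H$ need not remain good once the peeled vertices are restored (they may be misclassified by the two-variable function), and proving that it does is precisely the step you leave open.

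For comparison, the paper sidesteps the case analysis entirely with an averaging argument on the same vertex ordering: it bounds the sum $S=\sum_{i=1}^{n-1}|N(v_i)\otimes N(v_{i+1})|$ by observing that every vertex counted in the $i$-th term contributes an interval endpoint lying in $(a_i,a_{i+1})\cup(b_i,b_{i+1})$, that these open intervals are pairwise disjoint over $i$, and that at least three of the $2n$ endpoints are never counted, whence $S\le 2n-3<2(n-1)$. By pigeonhole some consecutive pair has $|N(v_t)\otimes N(v_{t+1})|\le 1$, and then $v_t$ is a function of $v_{t+1}$ together with the at most one vertex in the symmetric difference, giving functionality at most $2$ uniformly. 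If you wish to salvage your approach, the cleanest repair is to prove directly that a twin-free unit interval graph always contains two consecutive vertices whose neighbourhoods differ in at most one vertex; as written, your argument is incomplete.
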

\begin{proof}
Let $G$ be a unit interval graph with $n$ vertices and assume without loss of generality that $G$ has no isolated vertices (since any such vertex has functionality 0). 
Take a unit interval representation for $G = (V, E)$ with the interval endpoints all distinct. 
We label the vertices $v_1,\ldots,v_n$  in the order in which they appear on the real line (from left to right), and denote 
the endpoints of interval  $I_i$ corresponding to vertex $v_i$ by $a_i < b_i$.  
We will bound $$S=\sum\limits_{i=1}^{n - 1} |N(v_i) \otimes N(v_{i+1})|.$$
	
Note that any neighbour of $v_i$ which is not a neighbour of $v_{i+1}$ needs to have its right endpoint between $a_i$ and $a_{i+1}$. 
Similarly, any neighbour of $v_{i+1}$ but not of $v_i$ needs to have its left endpoint between $b_i$ and $b_{i+1}$. 
In other words, $|N(v_i) \otimes N(v_{i+1})|$ is bounded above by the number of endpoints in $(a_i, a_{i+1}) \cup (b_i, b_{i+1})$ 
(we say bounded above and not equal, since it might happen that $b_i$ lies between $a_i$ and $a_{i+1}$, without contributing to the symmetric difference).
	
The key is now to note that any endpoint can be counted at most once in the whole sum $S$, since all $(a_i, a_{i+1})$ are disjoint (and the same applies to the $(b_i, b_{i+1})$), 
and the $a$'s can only appear between $b$'s (and vice-versa). In fact, $a_1$ and $b_n$ are never counted in $S$, and if $a_2$ is between $b_1$ and $b_2$, then $v_1$ must be isolated, so $a_2$ is not counted either. 
The sum is thus at most $2n - 3$. Since it has $n - 1$ terms, one of the terms, say $|N(v_t) \otimes N(v_{t+1})|$, must be at most 1. Therefore, the functionality of both $v_t$ and $v_{t+1}$ is at most $2$.

We have proved that each unit interval graph has a vertex of functionality at most $2$. Since this class is hereditary, we conclude that the functionality of any unit interval graph is at most~$2$.
\end{proof}

%%%%%%%%%%%%
\subsection{Permutation graphs}
%%%%%%%%%%%%
Let $\pi$ be a permutation of the elements in $\{1,2,\ldots,n\}$. The permutation graph of $\pi$ is a graph with vertex set $\{1,2,\ldots,n\}$ in which two vertices 
$i$ and $j$ are adjacent if and only if $(i-j)(\pi(i)-\pi(j))<0$. Clique-width is known to be unbounded in the class of permutation graphs \cite{perfect-cw}, and so is degeneracy. 
In Section~\ref{sec:perm-bound}, we show that functionality is bounded by a constant in this class. A similar result for unit interval graphs and graphs of bounded clique-width 
was proved by finding a pair of vertices with bounded symmetric difference of their neighbourhoods. This is not the case for permutation graphs, as we show in Section~\ref{sec:perm-sd}.
This result is of independent interest, because in conjunction with Theorem~\ref{cwd} it provides an alternative proof of the fact that clique-width is unbounded in the class of permutation
graphs.

\subsubsection{Functionality is bounded for permutation graphs}
\label{sec:perm-bound}
For the purpose of this section, we associate a permutation $\pi$ with its plot, i.e. 	the set of points $(i, \pi(i))$ in the plane. 
We label those points by $\pi(i)$ and define the {\em geometric neighbourhood} of a point $k$ to be the union of two regions in the plane: 
the one above and to its left, and the one below and to its right. Then it is not difficult to see that the set of points of the permutation 
lying in the geometric neighbourhood of $k$ is precisely the set of neighbours of vertex $k$ in the permutation graph of $\pi$.

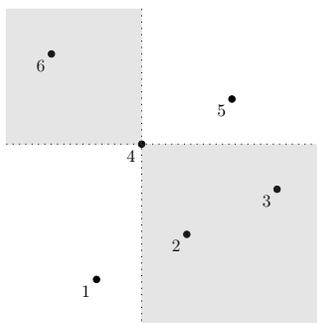
\begin{figure}[ht]
\begin{center}
\begin{tikzpicture}[scale=0.6, transform shape]
\filldraw 
(1,6) circle (2pt) node[below left] {6}
(2,1) circle (2pt) node[below left] {1}
(3,4) circle (2pt) node[below left] {4}
(4,2) circle (2pt) node[below left] {2}
(5,5) circle (2pt) node[below left] {5}
(6,3) circle (2pt) node[below left] {3};`

\draw[dotted] (3, 7) -- (3, 0);
\draw[dotted] (0, 4) -- (7, 4);

\fill[gray, fill opacity = 0.2] (3,4) -- (0,4) -- (0,7) -- (3,7) -- cycle;
\fill[gray, fill opacity = 0.2] (3,4) -- (7,4) -- (7,0) -- (3,0) -- cycle;
\end{tikzpicture}
\end{center}
\caption{Geometric representation of $\pi = 614253$, with the neighbourhood of 4 shaded}
\label{examplepermutation}	
\end{figure}

\begin{theorem}
	The functionality of permutation graphs is at most 8.
\end{theorem}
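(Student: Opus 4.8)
Since the class of permutation graphs is hereditary (an induced subgraph of the permutation graph of $\pi$ is itself the permutation graph of the subsequence of $\pi$ on the retained positions), the definition of $fun$ means it suffices to produce, in an arbitrary permutation graph, a single vertex of functionality at most $8$; heredity then propagates the bound to the whole graph. I would work entirely in the plot of $\pi$ introduced above, so that each vertex is a point, the neighbours of a vertex $y$ are exactly the points in its geometric neighbourhood (the region above and to the left of $y$ together with the region below and to its right), and the non-neighbours are the points above-right or below-left of $y$.

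The plan is to recast functionality as a separation condition. A vertex $y$ is a function of a set $X$ of vertices precisely when every neighbour $z$ of $y$ is distinguished from every non-neighbour $z'$ of $y$ (with $z,z'\notin X$) by some $x\in X$, i.e. $x\in N(z)\otimes N(z')$; one then defines the Boolean function by reading off, from the adjacency pattern to $X$, on which side of $y$'s two dividing lines a vertex lies. Geometrically, each reference point $x$ splits the plane along the horizontal and vertical lines through it, adjacency to $x$ is constant on each resulting open cell, and the common refinement of these grids must never place a point of the geometric neighbourhood of $y$ and a point outside it into cells with the same adjacency vector. So the task is to choose $y$ together with at most eight reference points whose grid certifies this separation. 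I would take the reference points to be extremal points controlling the four quadrant boundaries around $y$ — informally, the points closest to $y$ along the four axis directions and the four diagonal directions — and verify by a case analysis over the finitely many cell types that no neighbour and non-neighbour of $y$ can share an adjacency vector.

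The main obstacle is that adjacency here is the inversion (incomparability) relation, which couples both coordinates: the adjacency of $z$ to a reference $x$ is the sign of a \emph{product} of two coordinate differences, so no single reference point detects a pure horizontal or vertical threshold. Consequently the naive choice of $y$ as a global extreme (topmost, leftmost, and so on) fails: the neighbourhood of such a $y$ is cut out by a threshold on one coordinate of $y$ itself, a threshold that no other vertex reproduces, and one can build permutations in which a far ``low-left'' non-neighbour and a far ``high-right'' neighbour are confused by every extreme reference. Overcoming this forces a non-extremal, carefully selected $y$ — obtained by an averaging or extremal argument over all $n$ candidate vertices, in the two-dimensional spirit of the unit interval proof — for which the staircases bounding its neighbourhood are short enough to be pinned down by only eight crosses. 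Establishing that such a $y$ always exists, and that eight reference points genuinely suffice to separate its neighbourhood from its non-neighbourhood, is the crux of the argument.
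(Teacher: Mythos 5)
Your setup is sound and matches the paper's: heredity reduces the task to finding one vertex of functionality at most $8$ in an arbitrary permutation graph, and the right place to look is the plot of $\pi$, where being a function of a set $X$ amounts to $X$ separating $N(y)$ from its complement. You have also correctly sensed that the chosen vertex must be non-extremal and that its reference points should be points ``near'' it in the plot. But the proposal stops exactly where the proof has to start: you state yourself that establishing the existence of a suitable $y$ and verifying that eight references suffice ``is the crux,'' and neither step is carried out. Two concrete ingredients are missing. First, the precise local condition on $y$: the paper requires $y$ to be simultaneously a \emph{vertical middle} point (its value lies between the values of its two horizontally adjacent points $t,b$) and a \emph{horizontal middle} point (its position lies between the positions of its two vertically adjacent points $l,r$). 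Under that condition the four surrounding points are the \emph{only} points of the permutation in the cross-shaped region around $y$, and one checks directly that a point outside $\{y,l,r,t,b\}$ is adjacent to $y$ iff it is adjacent to both $r$ and $b$ or to both $l$ and $t$, i.e.\ $f=x_rx_b\vee x_lx_t$. Your proposed reference set (``closest points along the four axis directions and the four diagonal directions'') is not this: without the middle-point property the four axis-neighbours do not determine $N(y)$, and the diagonal neighbours play no role in any correct version of the argument.

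Second, and more importantly, a simultaneous middle point need not exist, so an existence argument is required; vague appeal to ``averaging'' does not supply it. The paper's device is to relax to \emph{weak} middle points (the middle three of every five consecutive points in each direction), count that for $n\ge 13$ more than half of the points are weak vertical middles and more than half are weak horizontal middles, and conclude by pigeonhole that some $y$ is both. One then deletes the four extra weak-middle witnesses $m_1,\dots,m_4$ to land in the strict middle-point configuration, expresses $y$ as a function of $\{l,r,t,b\}$ in the reduced graph, and reinstates $m_1,\dots,m_4$ as four additional reference points to get the bound $4+4=8$. Without this relaxation-plus-deletion step your count of eight reference points has no derivation, and without the middle-point analysis the separation claim for your candidate references is false in general (e.g.\ a far neighbour and a far non-neighbour of $y$ lying in opposite ``quadrants'' can have identical adjacency to all eight of your proposed references). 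So the proposal as written has a genuine gap at its core.
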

\begin{proof}
Since the class of permutation graphs is hereditary, it suffices to show that every permutation graph contains a vertex of functionality at most 8. 
Let $G$ be a permutation graph corresponding to a permutation $\pi$. 
The proof will be given in two steps: first, we show that if there is a vertex with a specific property in $G$, 
then this vertex is a function of 4 other vertices. Second, we show how to find vertices that are ``close enough'' to having that property.
	
	\medskip
{\it Step} 1: Consider the plot of $\pi$. 
Among any 3 horizontally consecutive points, one is vertically between the two others. 
We call such a point {\em vertical middle} (in the permutation from Figure~\ref{examplepermutation}, the vertical middle points are 4, 2 and 3). 
Similarly, among any 3 vertically consecutive points, one is horizontally between the two others, and we call this point {\em horizontal middle} 
(in Figure~\ref{examplepermutation}, the horizontally middle points are 2, 5 and 4). 
	
Now let us suppose that $\pi$ has a point $x$ that is simultaneously a horizontal and a vertical middle point. 
Then $x$ is part of a triple $x$, $b$, $t$ (not necessarily in that order) of horizontally consecutive points, 
where $b$ is the bottom point (the lowest in the triple) and $t$ is the top point (the highest in the triple).
Also, $x$ is part  of a triple $x$, $l$, $r$ (not necessarily in that order) of vertically consecutive points, 
where $l$ is the leftmost and $r$ is the rightmost point in the triple (see Figure~\ref{middlepoint} for an illustration).

In general, $x$ can be at any of the 9 intersection points of pairs of 3 consecutive vertical and horizontal lines, i.e. $x$ is somewhere in $X$ (see Figure~\ref{middlepointareas}). 
We also have $l \in L$, $r \in R$, $t \in T$ and $b \in B$ for the surrounding points (see Figure~\ref{middlepointareas}). The important thing to note is that, 
since the points are consecutive, those are the {\em only} points of the permutation lying in the shaded area $X \cup L \cup R \cup T \cup B$.
Any point different from $x,l,r,t,b$ lies in one of $Q_1$, $Q_2,Q_3$ or $Q_4$.

\begin{figure}[ht]
\centering
\begin{subfigure}[t]{0.49\linewidth}
\centering
\begin{tikzpicture}[scale=0.6, transform shape]
	
	\draw[dotted] (3.5,0) -- (3.5,10);
	\draw[dotted] (4,0) -- (4,10);
	\draw[dotted] (4.5,0) -- (4.5,10);
	\draw[dotted] (0,5.5) -- (10,5.5);
	\draw[dotted] (0,6) -- (10,6);
	\draw[dotted] (0,6.5) -- (10,6.5);
	
	\filldraw (3.5,5.5) circle (2pt) node[below left] {$x$};
	\filldraw (1.5,6.5) circle (2pt) node[below left] {$l$};
	\filldraw (7,6) circle (2pt) node[below left] {$r$};
	\filldraw (4,2) circle (2pt) node[below left] {$b$};
	\filldraw (4.5,8) circle (2pt) node[below left] {$t$};	
	\fill[gray, fill opacity = 0.2] (4,6) -- (4,10) -- (0,10) -- (0,6) -- cycle;
	\fill[gray, fill opacity = 0.2] (4.5,6.5) -- (10,6.5) -- (10,0) -- (4.5,0) -- cycle;	
\end{tikzpicture}
\captionsetup{justification=centering}
\caption{The geometric neighbourhood corresponding to $(N(r) \cap N(b)) \cup (N(l) \cap N(t))$}% and the geometric neighbourhood corresponding to $(N(r) \cap N(b)) \cup (N(l) \cap N(t))$.}
\label{middlepoint}	
\end{subfigure}	
\begin{subfigure}[t]{0.49\linewidth}
\centering
\begin{tikzpicture}[scale=0.6, transform shape]

\fill[lightgray, fill opacity = 0.1] (0,6.5) -- (3.5, 6.5) -- (3.5,10) -- (4.5,10) -- (4.5,6.5) -- (10,6.5) -- (10, 5.5) -- (4.5,5.5) -- (4.5,0) -- (3.5,0) -- (3.5, 5.5) -- (0,5.5) -- cycle;

\draw[dotted] (3.5,0) -- (3.5,10);
\draw[dotted] (4.5,0) -- (4.5,10);
\draw[dotted] (0,5.5) -- (10,5.5);
\draw[dotted] (0,6.5) -- (10,6.5);

%\draw (3.5,5.5) -- (3.5,6.5) -- (4.5,6.5) -- (4.5,5.5) -- cycle;

%\filldraw[draw= black, fill=gray, fill opacity = 0.2] (3.5,5.5) -- (3.5,6.5) -- (4.5,6.5) -- (4.5,5.5) -- cycle;

\draw (4,6) node{$X$};
\draw (1.75,6) node{$L$};
\draw (7.25,6) node{$R$};
\draw (4,8.25) node{$T$};
\draw (4,2.75) node{$B$};
\draw (7.25,8.25) node{$Q_1$};
\draw (1.75,8.25) node{$Q_2$};
\draw (1.75,2.75) node{$Q_3$};
\draw (7.25,2.75) node{$Q_4$};

\fill[fill = gray, fill opacity = 0.2] (0,6.5) -- (3.5, 6.5) -- (3.5,10) -- (4.5,10) -- (4.5,6.5) -- (10,6.5) -- (10, 5.5) -- (4.5,5.5) -- (4.5,0) -- (3.5,0) -- (3.5, 5.5) -- (0,5.5) -- cycle;
\end{tikzpicture}
\captionsetup{justification=centering}
\caption{Partition of the plot}
\label{middlepointareas}	
\end{subfigure}
\caption{A middle point $x$ and its four surrounding points}
\end{figure}
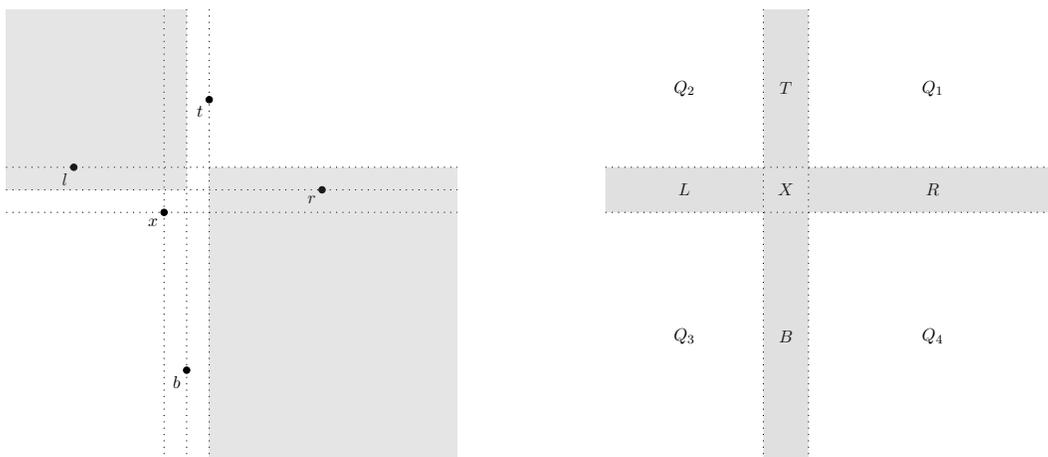

It is not difficult to see that the geometric neighbourhood corresponding to $(N(r) \cap N(b)) \cup (N(l) \cap N(t))$ (see Figure~\ref{middlepoint}) will always contain $Q_2$ and $Q_4$, 
and will never intersect $Q_1$ or $Q_3$. Therefore, the function that describes how $x$ depends on $\{l, r, t, b\}$ can be written  as follows: 
$$f(x_r,x_b,x_l,x_t)=x_rx_b\vee x_lx_t,$$
where $x_r,x_b,x_l,x_t$ are Boolean variables corresponding to points $r,b,l,t$, respectively. In other words, a vertex $y\not \in \{x,l, r, t, b\}$ 
is adjacent to $x$ if and only if $$f(A(y,r),A(y,b),A(y,l),A(y,t))=1.$$ 

\medskip
{\it Step} 2:	Let us relax the simultaneous middle point condition to the following one: 
amongst every 5 vertically (respectively horizontally) consecutive points, call the middle three {\em weak horizontal} (respectively {\em vertical}) {\em middle points}. 
Note that if the number of points is divisible by 5, at least $\frac{3}{5}$ of them are weak vertical and at least $\frac{3}{5}$ of them are weak horizontal middle points. Using this observation
it is not hard to deduce that if there are at least 13 points, then more than half of them are weak vertical and more than half of them are weak horizontal middle points.
Therefore, there must exist a point $x$ that is simultaneously both. We can deal with this case only, as the functionality of any graph on at most 12 vertices is at most 6. 
If $x$ is simultaneously a weak vertical and weak horizontal middle point, then there must exist quintuples $l$, $x$, $m_1$, $m_2$, $r$ and $t$, $x$, $m_3$, $m_4$, $b$ (not necessarily in that order), 
where $x$ is a simultaneous weak middle point in both directions, while $m_1$, $m_2$, $m_3$ and $m_4$ are the other weak middle points in their respective quintuples. 
By removing $m_1$, $m_2$, $m_3$ and $m_4$ from the graph, we find ourselves in the configuration of Step 1 and conclude that $x$ is a function of  $\{l, r, t, b\}$ in the reduced graph.
Therefore, in the original graph $x$ is a function of $\{l, r, t, b, m_1, m_2, m_3, m_4\}$, concluding the proof.
\end{proof}

\subsubsection{Symmetric difference in permutation graphs}
\label{sec:perm-sd}

Given a graph $G$ and a pair of vertices $x,y$ in $G$, let us denote $sd(x,y)=|N(x)\otimes N(y)|$ and 
$$
{\rm sd}(G)=\max\limits_H\min\limits_{x,y\in V(H)}sd(x,y),$$ where the maximum is taken over all induced subgraphs $H$ of $G$.
With some abuse of terminology we call  ${\rm sd}(G)$ the {\it symmetric difference} of $G$. 

This parameter was used implicitly in Theorems~\ref{cwd} and \ref{thm:unit-interval} to prove results about bounded 
functionality, because by bounding the symmetric difference we bound the functionality of a graph, which is easy to see. On the other hand, from the proof of Theorem~\ref{cwd}
it follows that graphs of bounded symmetric difference extend graphs of bounded clique-width. Therefore, by showing that this parameter is unbounded in a class $X$ of graphs we 
prove that the clique-width is unbounded in $X$. Our next result shows the symmetric difference is unbounded for permutation graphs. 

\begin{theorem}\label{sdpermutation}
	For any $t \in \mathbb N$, there is a permutation graph $G$ with ${\rm sd}(G) \geq t$.
\end{theorem}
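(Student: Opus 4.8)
The plan is to exhibit, for each $m$, a single permutation graph $G_m$ on $m^2$ vertices in which \emph{every} pair of vertices has large symmetric difference; then taking $H=G_m$ gives ${\rm sd}(G_m)\ge\min_{x,y}sd(x,y)$, and choosing $m$ large enough forces this to exceed $t$. The first step is to record a clean formula for $sd$ in the plot model of Section~\ref{sec:perm-bound}. For two points $p,q$ of the plot, a third point $z$ lies in $N(p)\otimes N(q)$ if and only if \emph{exactly one} of the two coordinates of $z$ falls strictly between the corresponding coordinates of $p$ and $q$. Writing $X$ for the set of points whose $x$-coordinate lies strictly between those of $p$ and $q$, and $Y$ for the analogous set in the $y$-direction, this reads $sd(p,q)=|X\otimes Y|=|X|+|Y|-2|X\cap Y|$, where $|X|$ and $|Y|$ are one less than the $x$- and $y$-rank differences of $p$ and $q$, and $X\cap Y$ is the set of points strictly inside the axis-parallel rectangle spanned by $p$ and $q$. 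I would prove this observation first, by a short sign analysis of the two coordinate differences.

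My first instinct for $G_m$ is the $m\times m$ grid permutation, i.e.\ the incomparability graph of the poset $[m]\times[m]$ (a $2$-dimensional poset, hence a permutation graph): index points by pairs $(a,b)\in\{0,\dots,m-1\}^2$ and give $(a,b)$ the $x$-rank $am+b$ and $y$-rank $bm+a$. Using the formula, most pairs have $sd$ linear in $m$; for instance two points sharing a row (or column) have $sd\ge m-1$. \textbf{The main obstacle is that this naive grid fails on its extremal comparable pairs}: the global minimum $(0,0)$ and maximum $(m-1,m-1)$ of the poset are isolated vertices of $G_m$, hence twins, so $sd=0$. Indeed the rectangle they span contains \emph{every} other point, so $X=Y$ and the symmetric difference collapses.

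The fix is to ``rotate'' the grid so that the plot becomes a diamond with no axis-parallel extremal corner: replace the $y$-rank $bm+a$ by $bm+(m-1-a)$, reversing the order within each block, so that the plot is an $m\times m$ grid turned $45^{\circ}$. In the new graph every vertex has a neighbour (its column-mates), so there are no twins of the degenerate kind; and the effect on the formula is exactly what is needed, since now no rectangle spanned by two points of the diamond can swallow both coordinate-slabs, keeping $|X\cap Y|$ bounded away from $\min(|X|,|Y|)$. Concretely, the two former ``corner'' pairs now span a rectangle that misses the top and bottom strips of points, and one computes $sd=2m-2$ for them.

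I would then verify $sd(p,q)\ge m-1$ for \emph{all} pairs by a case analysis on the relative position of $p$ and $q$ in the plot: pairs consecutive in $x$-rank or in $y$-rank realise the minimum value $m-1$, the two extremal comparable pairs give $2m-2$, and the remaining pairs give at least as much. Carrying out and organising this case analysis cleanly — ideally replacing it by a uniform argument that lower-bounds $|X\otimes Y|$ rank-column by rank-column, as the naive grid already allows for same-row pairs — is the part I expect to require the most care, precisely because the danger is always a rectangle that is simultaneously tall, wide and full. Granting it, ${\rm sd}(G_m)\ge m-1$, and taking $m=t+1$ proves the theorem.
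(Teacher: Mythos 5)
Your construction is, up to reflecting the plot in the main diagonal (i.e.\ passing to the inverse permutation), exactly the one in the paper: the paper takes the $(t+1)\times(t+1)$ integer grid and applies the shear sending $(1,0)\mapsto(1,\frac{1}{t+1})$ and $(0,1)\mapsto(-\frac{1}{t+1},1)$, which is precisely your ``reverse the order within each block'' linearisation with $m=t+1$; your reduction of $sd(p,q)$ to the plot (a third point lies in $N(p)\otimes N(q)$ if and only if exactly one of its coordinates falls strictly between the corresponding coordinates of $p$ and $q$) is likewise the content of Figure~\ref{geometricsd}. So the approach and the example are the same. The problem is that the one step carrying all the content --- the uniform lower bound $sd(p,q)\ge m-1$ for \emph{every} pair --- is exactly the step you defer as a case analysis ``requiring the most care''; your spot checks (rank-consecutive pairs, the two extremal comparable pairs) are correct but do not cover the general pair, so as written the proof is incomplete.

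The paper closes this gap with a short two-case count that sidesteps the difficulty you anticipate. Instead of working with $|X\otimes Y|=|X|+|Y|-2|X\cap Y|$, which forces you to control how full the spanned rectangle is, it only counts points lying on the two pre-shear grid columns through $p=(x_1,x_2)$ and $q=(y_1,y_2)$: when $x_1\le y_1$ and $x_2\le y_2$, the points $(x_1,k)$ with $k<x_2$ land in the bottom region, those with $x_2<k\le y_2$ in the left region, the points $(y_1,k)$ with $k>y_2$ in the top region, and those with $x_2\le k<y_2$ in the right region, giving at least $x_2+(t-y_2)+2(y_2-x_2)=t+(y_2-x_2)\ge t$ points in the symmetric difference; the incomparable case is handled symmetrically using the two rows through $p$ and $q$. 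Since one only ever \emph{adds} contributions from the four regions and never has to subtract $|X\cap Y|$, the ``tall, wide and full'' rectangle can do no harm. Grafting this count onto your write-up completes the argument and yields your claimed bound ${\rm sd}(G_m)\ge m-1=t$.
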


\begin{proof}

Similarly to the previous section, we make use of the geometric representation of permutations. 
Given two vertices $x$ and $y$ of a permutation graph $G$, the symmetric difference of their neighbourhoods can be represented geometrically as an area in the plane (see Figure \ref{geometricsd}). 
More precisely, a vertex different from $x$ and $y$ lies in the symmetric difference of their neighbourhoods if and only if the corresponding point of the permutation lies in the shaded area. 

\begin{figure}[ht]
	\begin{center}
		\begin{tikzpicture}[scale=0.6, transform shape]
		\filldraw 
		(2,1) circle (2pt) node[below left] {$x$}
		(5,5) circle (2pt) node[below left] {$y$};
		
		\draw[dotted] (2, 0) -- (2, 7);
		\draw[dotted] (5, 0) -- (5, 7);
		\draw[dotted] (0, 1) -- (7, 1);
		\draw[dotted] (0, 5) -- (7, 5);				
		
		\fill[gray, fill opacity = 0.2] (5,0) -- (5,1) -- (2,1) -- (2,0) -- cycle;
		\fill[gray, fill opacity = 0.2] (0,1) -- (2,1) -- (2,5) -- (0,5) -- cycle;
		\fill[gray, fill opacity = 0.2] (2,7) -- (5,7) -- (5,5) -- (2,5) -- cycle;
		\fill[gray, fill opacity = 0.2] (7,5) -- (5,5) -- (5,1) -- (7,1) -- cycle;		
				
		\end{tikzpicture}
	\end{center}
	\caption{Geometric symmetric difference of two points $x$ and $y$}
	\label{geometricsd}
\end{figure}
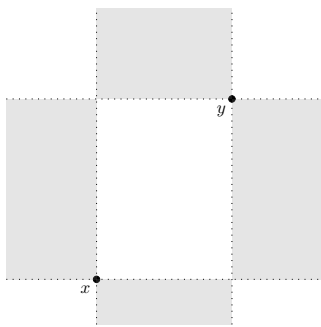

In order to prove the theorem, it suffices, for each $t \in \mathbb N$, to exhibit a set $S_t$ of points in the plane 
(with no two on the same vertical or horizontal line) such that for any pair $x, y \in S_t$, there are at least $t$ other points of $S_t$ lying in the geometric symmetric difference of $x$ and $y$. 
Such a construction immediately gives rise to a permutation and thus to a permutation graph with symmetric difference at least $t$.

We construct sets $S_t$ in the following way (see Figure \ref{exampleset} for an example): 
\begin{itemize}
	\item start with all the points with integer coordinates between 0 and $t$ inclusive;
	\item apply to the set the rotation sending $(1, 0)$ to $(1, \frac{1}{t+1})$ and $(0, 1)$ to $(-\frac{1}{t+1}, 1)$.
\end{itemize}

%\begin{figure}[ht]
%	\begin{center}
%		\begin{tikzpicture}[scale=0.6, transform shape]
%		
%		\foreach \i in {0,...,6}
%		{
%		\foreach \x in {0,...,6}
%		{
%		\ifnum \i > 5 {\ifnum \x > 5 \else \filldraw (\i + \x/7, \i/7 + \x) circle (2pt) node{}; \fi} \else \filldraw (\i + \x/7, \i/7 + \x) circle (2pt) node{}; \fi
%		}
%		}
%		\draw[dotted] (0, 0) -- (0, 7);
%		\draw[dotted] (0, 0) -- (7, 0);
%		
%		\end{tikzpicture}
%	\end{center}
%	\caption{The set $S_5$}
%	\label{exampleset}
%\end{figure}

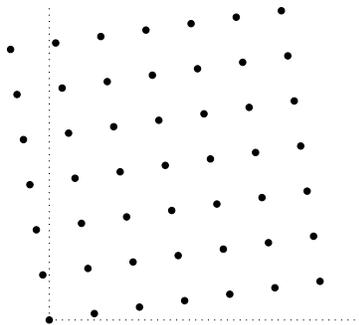
\begin{figure}[ht]
	\begin{center}
		\begin{tikzpicture}[scale=0.6, transform shape]
		
		\foreach \i in {0,...,6}
		{
			\foreach \x in {0,...,6}
			{
			\filldraw (\i - \x/7, \i/7 + \x) circle (2pt) node{}; 
			}
		}
		\draw[dotted] (0, 0) -- (0, 7);
		\draw[dotted] (0, 0) -- (7, 0);
		
		\end{tikzpicture}
	\end{center}
	\caption{The set $S_6$}
	\label{exampleset}
\end{figure}

To see that these sets have indeed the desired property, let $x, y \in S_t$. For simplicity, we will use the coordinates of the points before the rotation. Suppose $x = (x_1, x_2)$ and $y = (y_1, y_2)$. There are two possible cases (after switching $x$ and $y$ if necessary):

\begin{itemize}
	\item If $x_1 \leq y_1$ and $x_2 \leq y_2$, the following points all lie in the symmetric difference of $x$ and $y$:
	\begin{itemize}
		\item[(1)] Points $(x_1, k)$ with $k < x_2$ (in the bottom region).
		\item[(2)] Points $(x_1, k)$ with $x_2 < k \leq y_2$ (in the left region).
		\item[(3)] Points $(y_1, k)$ with $y_2 < k$ (in the top region).
		\item[(4)] Points $(y_1, k)$ with $x_2 \leq k < y_2$ (in the right region).
	\end{itemize}
	In particular, (1) and (3) account for at least $x_2 + t - y_2$ points, while (2) and (4) account for $2(y_2 - x_2)$ others. We conclude that in total, at least $t + (y_2 - x_2) \geq t$ points lie in the symmetric difference of $x$ and $y$. 
	
	\item If $x_1 \leq y_1$ and $x_2 > y_2$, the following points all lie in the symmetric difference of $x$ and $y$:
	\begin{itemize}
		\item[(1)] Points $(k, y_2)$ with $x_1 \leq k < y_1$ (in the bottom region).
		\item[(2)] Points $(k, x_2)$ with $k < x_1$ (in the left region).
		\item[(3)] Points $(k, x_2)$ with $x_1 < k \leq y_1$ (in the top region).
		\item[(4)] Points $(k, y_2)$ with $y_1 < k$ (in the right region).
	\end{itemize}
	Summing up, we find again at least $t$ points in the symmetric difference of $x$ and $y$. 
\end{itemize}
\end{proof}

This result together with Theorem~\ref{cwd} give an alternative proof of the following known fact.

\begin{cor}
	The class of permutation graphs has unbounded clique-width.
\end{cor}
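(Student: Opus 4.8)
The plan is to combine the bound from Theorem~\ref{cwd} with the construction of Theorem~\ref{sdpermutation}, passing through the symmetric difference parameter ${\rm sd}$. The crucial observation is that the proof of Theorem~\ref{cwd} does more than bound functionality: it actually locates, in every induced subgraph of a graph $G$ of clique-width $k$, a pair of vertices $x,y$ whose neighbourhoods differ in few places. Indeed, the subtree $T'$ constructed there has at most $2k$ leaves and contains two leaves $x,y$ sharing a label at $v$, so that $|N(x)\otimes N(y)|\le 2k-2$.

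First I would record this as an inequality for the symmetric difference parameter. Since the argument above applies to every induced subgraph $H$ of $G$, each such $H$ contains a pair $x,y$ with $sd(x,y)\le 2\,{\rm cwd}(H)-2\le 2\,{\rm cwd}(G)-2$, the last step using the fact (quoted in the proof of Theorem~\ref{cwd}) that clique-width does not increase when passing to an induced subgraph. Taking the maximum over $H$ of the minimum over pairs then yields
$$
{\rm sd}(G)\le 2\,{\rm cwd}(G)-2.
$$

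With this inequality in hand, the corollary follows by contradiction. Suppose the class of permutation graphs had clique-width bounded by a constant $c$, so that ${\rm cwd}(G)\le c$ for every permutation graph $G$. Then the displayed inequality would force ${\rm sd}(G)\le 2c-2$ for all such $G$. But Theorem~\ref{sdpermutation}, applied with $t=2c-1$, produces a permutation graph $G$ with ${\rm sd}(G)\ge 2c-1$, a contradiction. Hence clique-width is unbounded on permutation graphs.

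I do not anticipate any genuine obstacle here, as all the real work has already been carried out in the two quoted theorems; the only point requiring a little care is the first paragraph, where one must read off the symmetric-difference bound directly from the proof of Theorem~\ref{cwd} rather than merely quoting its functionality conclusion. Everything else is a routine comparison of constants.
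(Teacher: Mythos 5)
Your proposal is correct and follows exactly the route the paper intends: it extracts the bound ${\rm sd}(G)\le 2\,{\rm cwd}(G)-2$ from the proof of Theorem~\ref{cwd} (as the paper itself notes just before Theorem~\ref{sdpermutation}) and combines it with the unboundedness of ${\rm sd}$ on permutation graphs. There is nothing to add; the constant-chasing is right.
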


\subsection{Line graphs and generalization}

The line graph $L(G)$ of a graph $G$ is the graph with vertex set $E(G)$ in which two vertices are adjacent if and only if 
the corresponding edges of $G$ share a vertex. In other words, $L(G)$ is the intersection graph of edges of $G$.  Both 
clique-width \cite{four} and degeneracy is unbounded in the class of line graphs. In this section we show that the functionality of a line graph is at most 6. 

\begin{theorem} 
The functionality of line graphs is at most 6.
\end{theorem}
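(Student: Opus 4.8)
The plan is to exploit that the class of line graphs is hereditary: an induced subgraph of $L(G)$ on a set $S\subseteq E(G)$ is again a line graph, namely the line graph of the subgraph of $G$ with edge set $S$. Hence it suffices to exhibit, in every line graph $L(G)$ with at least one vertex (so that $G$ has at least one edge), a single vertex of functionality at most $6$; the hereditary maximum in the definition of $fun$ then finishes the argument.

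So I would fix an arbitrary edge $e=uv$ of $G$ and argue that $e$, viewed as a vertex of $L(G)$, is a function of at most $6$ others. The starting point is the adjacency description in $L(G)$: an edge $z$ is adjacent to $e$ precisely when $z$ is incident to $u$ or to $v$. I would detect ``$z$ is incident to $u$'' using two reference edges at $u$. Concretely, if $\deg_G(u)\ge 3$, pick two further edges $f_1=uw_1$ and $f_2=uw_2$ at $u$ (with $w_1\ne w_2$, both $\ne v$). For any $z\notin\{e,f_1,f_2\}$ one checks that $z$ is incident to $u$ if and only if $z$ is adjacent to both $f_1$ and $f_2$, the only exception being $z=w_1w_2$: such a $z$ meets $w_1$ and $w_2$ but not $u$, yet is adjacent to both $f_1$ and $f_2$. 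If instead $\deg_G(u)\le 2$, then the only edges meeting $u$ are $e$ and at most one further edge $e'$, so once $e'$ is excluded no $z$ is incident to $u$ at all, and the $u$-side costs only the single variable $e'$ (or nothing when $\deg_G(u)=1$).

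Doing the symmetric thing at $v$ with edges $g_1=vx_1$, $g_2=vx_2$ and exceptional edge $x_1x_2$, I obtain the Boolean formula
$$f(z)=\bigl(A(z,f_1)\wedge A(z,f_2)\bigr)\vee\bigl(A(z,g_1)\wedge A(z,g_2)\bigr),$$
with each clause replaced by the constant $0$ when the corresponding endpoint has degree at most $2$. This equals $A(e,z)$ for every $z$ lying outside the reference set together with the at most two exceptional edges $w_1w_2$ and $x_1x_2$. Declaring those exceptional edges to be additional arguments of the function, $e$ becomes a function of $\{f_1,f_2,g_1,g_2\}\cup\{w_1w_2,x_1x_2\}$ (low-degree sides contributing fewer variables), a set of at most $4+2=6$ elements. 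One should check that these vertices of $L(G)$ are pairwise distinct --- the $f_i$ meet $u$ and the $g_j$ meet $v$, so none of them equals $e$ or coincides across the two sides --- and that the formula is correct on the edges incident to $u$ or $v$ that are not reference edges; both checks are routine. Hence $fun(e)\le 6$.

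The only delicate point is the bookkeeping of the exceptional edges $w_1w_2$ and $x_1x_2$: these are exactly the configurations in which a non-neighbour of $e$ happens to be adjacent to both reference edges on one side, and they must be absorbed as extra variables rather than captured by the Boolean function --- precisely the device used to handle the removed middle points $m_1,\dots,m_4$ in the permutation-graph argument. Everything else (the adjacency description in $L(G)$, the degenerate low-degree cases, and hereditariness) is straightforward, so the main work is simply verifying the formula together with this exceptional-edge count, which yields the uniform bound of $6$.
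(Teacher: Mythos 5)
Your proposal is correct and follows essentially the same strategy as the paper: detect incidence of an edge $z$ to each endpoint of $e$ via reference edges at that endpoint, and combine the two tests with an OR of ANDs. The only difference is a minor trade-off in the bookkeeping --- the paper takes \emph{three} reference edges per endpoint of degree at least $4$ (so that no two-vertex edge can meet all three references without passing through the endpoint, eliminating exceptional edges entirely), whereas you take two references plus the single possible exceptional edge $w_1w_2$ as an extra variable; both yield the bound $6$.
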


\begin{proof}
Let $G$ be a graph and $H$ be the line graph of $G$. Since the class of line graphs is hereditary, it suffices  to prove that $H$ has a vertex of functionality at most 6.
We will prove a stronger result showing that {\it every} vertex of $H$ has functionality at most $6$. 

Let $x$ be a vertex in $H$, i.e. an edge in $G$. We denote the two endpoints of this edge in $G$ by $a$ and $b$. 
Assume first that both the degree of $a$ and the degree of $b$ is at least 4. 
Let $Y=\{y_1,y_2,y_3\}$ be a set of any three edges of $G$ incident to $a$, and 
let $Z=\{z_1,z_2,z_3\}$ be a set of any three edges of $G$ incident to $b$.

We claim that a vertex $v\not \in \{x\}\cup Y\cup Z$ is adjacent to $x$ in $H$ if and only if it is adjacent to every vertex in $Y$ or to every vertex in $Z$. Indeed, if 
$v$ is adjacent to $x$ in $H$, then the edge $v$ intersects the edge $x$ in $G$. If the intersection consists of $a$, then $v$  is adjacent to every vertex in $Y$ in the graph $H$, and if
the intersection consists of $b$, then $v$  is adjacent to every vertex in $Z$ in the graph $H$. Conversely, let $v$ be adjacent to every vertex in $Y$, then $v$ must intersect 
the edges $y_1,y_2,y_3$ in $G$ at vertex $a$, in which case $v$ is adjacent to $x$  in $H$. Similarly, if $v$ is adjacent to   every vertex in $Z$, then $v$ intersects 
the edges $z_1,z_2,z_3$ in $G$ at vertex $b$ and hence $v$ is adjacent to $x$ in $H$.

Therefore, in the case when both $a$ and $b$ have degree at least 4 in $G$, 
the function that describes how $x$ depends on $\{y_1,y_2,y_3,z_1,z_2,z_3\}$ in the graph $H$ can be written  as follows: 
$$f(y_1,y_2,y_3,z_1,z_2,z_3)=y_1y_2y_3\vee z_1z_2z_3.$$

If the degree of $a$ is less than 4, we include in $Y$ all the edges of $G$ distinct from $x$ which are incident to $a$ (if there are any) and remove the term $y_1y_2y_3$ from the function.
Similarly, if  the degree of $b$ is less than 4, we include in $Z$ all the edges of $G$ distinct from $x$ which are  incident to $b$ (if there are any) and remove the term $z_1z_2z_3$ from the function.
If both terms have been removed, the function is defined to be identically 0, i.e. no vertices are adjacent to $x$ in $H$, except for those in $Y\cup Z$.   
\end{proof}

Having proved that the intersection graph of edges, i.e. the intersection graph  of a family of $2$-subsets, has bounded functionality, it is  natural to ask whether the intersection graph of a family of $k$-subsets
has bounded functionality for $k>2$. This question is substantially harder and we present a solution only for $k=3$.  

\subsubsection{Line graphs of 3-uniform hypergraphs}

In this section we will show that intersection graphs of 3-uniform hypergraphs is a class of bounded functionality. 
We will denote a 3-uniform hypergraph with the ground set $V$ by $(V, \mathcal{S})$, where $\mathcal{S} \subseteq V \times V \times V$. 
We will use variables $s, s', s_1, s_2, \ldots$ to denote hyperedges, i.e. the elements of $\mathcal{S}$, and variables $v, v_1, v_2,\ldots $ to denote the elements of $V$. 
We will say that two hyperedges $s$ and $s'$ intersect if $s \cap s' \neq \emptyset$. We start with a preparatory result.  

\begin{lemma}\label{lemma1}
Let $(V, \mathcal{S})$ be a 3-uniform hypergraph and $v \in V$. Then one of the following holds:
\begin{itemize}
\item There are 3 hyperedges $s_1, s_2, s_3$ such that $s_i \cap s_j = \{v\}$ for all $1\leq i<j \leq 3$. 
\item There are 4 vertices $v_1, v_2, v_3, v_4$ such that each hyperedge $s \in \mathcal{S}$ that contains $v$ also contains at least one of the $v_1, v_2, v_3$ or $v_4$. 
\end{itemize} 
\end{lemma}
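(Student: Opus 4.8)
The plan is to focus only on the hyperedges through $v$ and reduce the statement to a standard fact about matchings and vertex covers. For each hyperedge $s \in \mathcal{S}$ with $v \in s$, call the two-element set $s \setminus \{v\}$ its \emph{petal}. Two such hyperedges $s, s'$ satisfy $s \cap s' = \{v\}$ precisely when their petals are disjoint. Thus the first alternative of the lemma asks for three hyperedges through $v$ with pairwise disjoint petals, while the second asks for four vertices meeting every petal. Viewing the petals as the edges of a graph on $V \setminus \{v\}$ (the link of $v$), these are exactly a matching of size $3$ and a vertex cover of size $4$. The lemma then reduces to the elementary fact that a graph with no matching of size $3$ has a vertex cover of size at most $4$, obtained from the endpoints of a maximal matching.

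First I would choose a collection $s_1, \ldots, s_m$ of hyperedges through $v$ with pairwise disjoint petals that is maximal under inclusion. If $m \ge 3$, any three of these hyperedges witness the first alternative and we are done. Otherwise $m \le 2$, so the petals $s_1 \setminus \{v\}, \ldots, s_m \setminus \{v\}$ together contain at most four vertices; let $v_1, v_2, v_3, v_4$ be these vertices, padded with arbitrary elements of $V$ if $m \le 1$.

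The core step is then to verify that $\{v_1, v_2, v_3, v_4\}$ meets every hyperedge through $v$. Take any $s \in \mathcal{S}$ with $v \in s$. If $s$ is one of the $s_i$, its petal already lies in $\{v_1, v_2, v_3, v_4\}$. Otherwise, by maximality of the collection, $s$ cannot be adjoined to it, so the petal of $s$ must intersect the petal of some $s_i$, and that common vertex is one of $v_1, v_2, v_3, v_4$. In either case $s$ contains $v$ together with at least one of $v_1, v_2, v_3, v_4$, which is exactly the second alternative.

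The argument is short and I do not expect a genuine obstacle. The only points requiring care are bookkeeping in the translation: checking that each petal is a genuine two-element set, handling the degenerate cases $m = 0$ and $m = 1$ by padding the vertex set up to four, and observing that it is \emph{maximality} under inclusion (rather than maximum cardinality) of the collection that drives the covering property in the last step.
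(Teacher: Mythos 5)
Your proposal is correct and follows essentially the same route as the paper: both pass to the link of $v$ (the graph whose edges are the sets $s\setminus\{v\}$ over hyperedges $s$ containing $v$), take a maximal matching there, and either extract three pairwise-disjoint petals or use the at most four matched vertices as a cover via maximality. No substantive difference.
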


\begin{proof}
Consider the set $\mathcal{E}=\{s \backslash \{v\} : s \in \mathcal{S}, v \in s\}$. This is the set of pairs of vertices that are obtained by removing vertex $v$ from the hyperedges that contain $v$. 
Therefore, $(V, \mathcal{E})$ can be viewed as a graph. The lemma now says that either this graph contains a matching with 3 edges (as a subgraph) or it contains 4 vertices that any edge is adjacent to (vertex cover of size 4).
The proof of this is now easy. One can take a maximal matching $M$, and if it has at least 3 edges, then we are done. 
In the other case, when the maximal matching $M$ has at most two edges, take $v_i$'s to be the vertices of the matching.
If needed, add arbitrary vertices to obtain a set of 4 vertices. 
By maximality of the matching, every hyperedge contains at least one of the vertices selected, hence we are done as well.  
\end{proof}

The following two easy observations will be needed in the course of the proof.

\begin{obs} \label{obs1}
Let $(V, \mathcal{S})$ be a 3-uniform hypergraph. Suppose hyperedges $s_1, s_2, s_3 \in \mathcal{S}$ pairwise intersect at exactly one vertex, say $\{v\} = s_1 \cap s_2 = s_2 \cap s_3 = s_3 \cap s_1$.  
In other words, $s_1=\{v, v_1, v_2\}$, $s_2=\{v, v_3, v_4\}$, $s_1=\{v, v_5, v_6\}$, for some distinct vertices $v_1, v_2, \ldots, v_6$. 
Let $F'=\{(v_i, v_j, v_k): 1 \leq i \leq 2,  3 \leq j \leq 4, 5 \leq k \leq 6\}$ be the set of 8 hyperedges that intersect each of $s_1, s_2, s_3$ in exactly one vertex that is different from $v$. 
Then one can easily determine whether a given edge $s' \in \mathcal{S} \backslash F'$ contains vertex $v$ or not by looking at the intersection of $s'$  with $s_1, s_2, s_3$. 
Indeed, $s'$ contains $v$ if and only if $s'$ intersects each of $s_1$, $s_2$ and $s_3$.
\end{obs}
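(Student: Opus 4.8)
The plan is to prove the biconditional ``$s'$ contains $v$ if and only if $s'$ intersects each of $s_1,s_2,s_3$'' by treating the two implications separately. The forward direction is immediate: if $v \in s'$, then since $v \in s_1 \cap s_2 \cap s_3$ by hypothesis, $s'$ meets each of $s_1$, $s_2$ and $s_3$ (at the very least in $v$), so nothing further is required. All the content sits in the reverse direction, which I would establish by a short pigeonhole argument exploiting the fact that the three pairs $\{v_1,v_2\}$, $\{v_3,v_4\}$, $\{v_5,v_6\}$ are pairwise disjoint.

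For the reverse direction I would argue by contradiction. Suppose $s'$ intersects each of $s_1$, $s_2$, $s_3$, but $v \notin s'$. Since $s_1 = \{v, v_1, v_2\}$ and $v \notin s'$, the nonempty set $s' \cap s_1$ is contained in $\{v_1, v_2\}$, so $s'$ contains $v_1$ or $v_2$; likewise $s'$ contains one of $v_3, v_4$ and one of $v_5, v_6$. The hypothesis $s_i \cap s_j = \{v\}$ for $i \neq j$ guarantees that $v_1, \ldots, v_6$ are distinct, so these three pairs are pairwise disjoint and the three chosen vertices are distinct. As $s'$ is a hyperedge of a $3$-uniform hypergraph, $|s'| = 3$, and these three distinct vertices already exhaust $s'$. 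Hence $s' = \{v_i, v_j, v_k\}$ with $i \in \{1,2\}$, $j \in \{3,4\}$, $k \in \{5,6\}$, that is, $s' \in F'$, contradicting the assumption $s' \in \mathcal{S} \setminus F'$. Therefore $v \in s'$, as desired.

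The only step demanding any care is the cardinality bookkeeping in the reverse direction: one must invoke both the disjointness of the three pairs (to see the selected vertices are distinct) and the $3$-uniformity (to see they fill up all of $s'$) in order to conclude that $s'$ consists of exactly one vertex from each pair, which is precisely the defining shape of the members of $F'$. I do not expect a genuine obstacle here, as the whole argument is elementary once this pigeonhole observation is in place; the role of the exclusion $s' \notin F'$ is exactly to rule out the single configuration that would otherwise allow $s'$ to meet all three $s_i$ while avoiding $v$.
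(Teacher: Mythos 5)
Your proof is correct and is exactly the fleshed-out version of the paper's own one-line justification (the ``Indeed\dots'' sentence), with the forward direction trivial and the reverse direction handled by the same pigeonhole observation that a hyperedge meeting all three $s_i$ while avoiding $v$ must take one vertex from each of the three disjoint pairs and hence lie in $F'$. No discrepancies to report.
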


\begin{obs} \label{obs2}
Let $(V, \mathcal{S})$ be a 3-uniform hypergraph. Suppose hyperedges $s_1, s_2, s_3 \in \mathcal{S}$ pairwise intersect at exactly 2 vertices. 
In other words, $s_1=\{v_1, v_2, v_3\}$, $s_2=\{v_1, v_2, v_4\}$ and $s_3=\{v_1, v_2, v_5\}$, for some distinct vertices $v_1, v_2, v_3, v_4, v_5 \in V$. 
Let $F'$ be the set containing the hyperedge $\{v_3, v_4, v_5\}$. Then one can easily determine whether a given edge $s' \in \mathcal{S} \backslash F'$ contains at least one of the vertices $v_1, v_2$ or not 
by looking at the intersection of $s'$  with $s_1, s_2, s_3$. Indeed, $s'$ contains $v_1$, $v_2$ or both if and only if $s'$ intersects each of $s_1$, $s_2$ and $s_3$.
\end{obs}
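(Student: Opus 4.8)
The plan is to prove the stated equivalence by treating its two directions separately, exploiting the fact that $s'$ is a hyperedge of size exactly three. First I would record the structural observation underlying the whole setup: by construction both $v_1$ and $v_2$ lie in each of $s_1$, $s_2$ and $s_3$, since these three hyperedges share the common pair $\{v_1,v_2\}$ and differ only in their private third vertices $v_3$, $v_4$, $v_5$. With this in hand, the forward implication is immediate: if $s'$ contains $v_1$ or $v_2$, then $s'$ meets each of $s_1$, $s_2$, $s_3$ at that common vertex, so $s'$ intersects all three.

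The substance lies in the converse, which I would argue by contradiction. Suppose $s' \in \mathcal{S} \setminus F'$ intersects each of $s_1$, $s_2$, $s_3$, yet contains neither $v_1$ nor $v_2$. I would then inspect the three intersections one at a time. Since $s' \cap s_1 \neq \emptyset$ and $s_1 = \{v_1,v_2,v_3\}$ while $s'$ avoids $v_1$ and $v_2$, the only remaining possibility is $v_3 \in s'$; the identical argument applied to $s_2$ and $s_3$ forces $v_4 \in s'$ and $v_5 \in s'$. Because $v_3$, $v_4$, $v_5$ are distinct and $s'$ has exactly three elements, this pins down $s' = \{v_3,v_4,v_5\}$, i.e. $s' \in F'$, contradicting the hypothesis $s' \notin F'$. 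Hence every such $s'$ must contain at least one of $v_1$, $v_2$, which closes the equivalence.

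I do not expect a genuine obstacle here: the argument is a short case analysis resting entirely on $3$-uniformity, which guarantees that ``avoiding $v_1$ and $v_2$ while meeting all three $s_i$'' forces the three slots of $s'$ to be filled precisely by the private vertices $v_3$, $v_4$, $v_5$. The only point demanding care is the bookkeeping that identifies the single exceptional hyperedge as $\{v_3,v_4,v_5\}$ — this is exactly why $F'$ is defined to consist of that one hyperedge, and excluding it is precisely what makes the biconditional valid rather than merely a one-sided implication.
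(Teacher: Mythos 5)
Your proposal is correct and matches the argument the paper intends: the forward direction follows because $v_1,v_2$ lie in all three $s_i$, and the converse is exactly the observation that avoiding $v_1,v_2$ while meeting each $s_i$ forces $s'=\{v_3,v_4,v_5\}$, the unique hyperedge excluded by $F'$. The paper states this equivalence without spelling out the case analysis, so your write-up is simply a fuller version of the same reasoning.
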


\begin{definition}
Let $(V, \mathcal{S})$ be a 3-uniform hypergraph and let $v_1, v_2 \in V$. We will call the pair $v_1v_2$ {\it thick} if there are at least 32 hyperedges in $\mathcal{S}$ that contain $\{v_1, v_2\}$. 
\end{definition}

We will split our analysis into two cases. In the first lemma we will show that the intersection graphs of 3-uniform hypergraphs without thick pairs have bounded functionality. 
In the second lemma we will provide a structural theorem about hypergraphs containing thick pairs, from which a bounded functionality result follows easily as well. 
We note that in the case without thick pairs, we provide a bound on functionality for {\it  any} vertex of the intersection graph.
Meanwhile, in the case of hypergraphs with thick pairs, for any given bound $M$ one can find a hypergraph and a hyperedge such that corresponding vertex in the intersection graph has functionality at least $M$. 
Thus a structural result is needed in this case, to show that we can find a {\it particular} hyperedge in any given hypergraph with thick pairs, 
such that the functionality of the vertex corresponding to the hyperedge is bounded by a constant, that does not depend on the hypergraph.

We start with the case when there are no thick pairs.

\begin{lemma} \label{lemmawithoutthick}
Let $(V, \mathcal{S})$ be a 3-uniform hypergraph without thick pairs. Then for any hyperedge $s \in \mathcal{S}$ there is a set of hyperedges $F \subset \mathcal{S} \backslash \{s\}$ of size $|F| \leq 462$ 
such that for any $s' \in \mathcal{S}\backslash (F \cup \{s\})$, one can determine whether $s'$ intersects $s$ by looking at the intersections of $s'$ with the hyperedges of $F$.    
\end{lemma}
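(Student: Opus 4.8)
The plan is to observe that $s'$ meets $s=\{a,b,c\}$ exactly when $s'$ contains at least one of $a,b,c$, so the Boolean function governing the adjacency of the vertex $s$ in the intersection graph can be taken to be the disjunction of three per-vertex membership tests. It therefore suffices, for each $v\in\{a,b,c\}$, to construct a bounded set of probe hyperedges in $\mathcal S\setminus\{s\}$ from whose intersection pattern with an unknown $s'$ one can read off whether $v\in s'$; the final set $F$ is the union of these three probe sets, and $f$ is the OR of the three tests.

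Fix $v\in\{a,b,c\}$ and apply Lemma~\ref{lemma1} to $v$. In its first outcome there are three hyperedges $s_1,s_2,s_3$ through $v$ with $s_i\cap s_j=\{v\}$, which is precisely the hypothesis of Observation~\ref{obs1} (the matching edges meet only in $v$, so all pairwise intersections are single vertices, and Observation~\ref{obs2} is not needed here). Choosing these three probes distinct from $s$ and adding them, together with the at most $8$ exceptional hyperedges $F'$ of Observation~\ref{obs1}, to $F$, Observation~\ref{obs1} guarantees that every surviving $s'$ contains $v$ if and only if $s'$ meets each of $s_1,s_2,s_3$. Thus the $v$-test becomes the conjunction of three intersection queries. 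The exceptions $F'$ are exactly the hyperedges that meet $s_1,s_2,s_3$ away from $v$ and would otherwise give a false positive; placing them in $F$ removes them from consideration, and in particular prevents an $s'$ that meets $s$ through a vertex other than $v$ from corrupting the $v$-test.

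In the second outcome of Lemma~\ref{lemma1} there are four vertices $v_1,v_2,v_3,v_4$ that together meet every hyperedge through $v$. Here I would use the standing no-thick-pairs hypothesis: any hyperedge through $v$ has the form $\{v,v_i,w\}$ for some $i$, hence contains the pair $\{v,v_i\}$, and since this pair is not thick it lies in at most $31$ hyperedges. Consequently at most $4\cdot 31$ hyperedges contain $v$, and I simply put all of them (other than $s$) into $F$. For any $s'$ outside $F\cup\{s\}$ we then know outright that $v\notin s'$, so the $v$-test is identically false. This brute-force inclusion is the only place the absence of thick pairs is used, and it is what keeps $F$ finite in this branch.

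Assembling $F$ as the union of the three per-vertex sets and letting $f$ be the disjunction of the corresponding tests gives a function that correctly decides, for every $s'\in\mathcal S\setminus(F\cup\{s\})$, whether $s'$ meets $s$, using only the intersections of $s'$ with members of $F$. Summing the per-vertex contributions over $a,b,c$ — a matching branch contributing three probes and at most eight exceptions, a cover branch at most $4\cdot 31-1$ hyperedges — comfortably yields the stated bound $|F|\le 462$. The main obstacle is the cover branch: the crux is recognising that Lemma~\ref{lemma1} combined with no thick pairs forces only boundedly many hyperedges through $v$, so that listing them all in $F$ is affordable. The matching branch is immediate from Observation~\ref{obs1}, and the only delicate point in the disjunction is to keep Observation~\ref{obs1}'s exception hyperedges inside $F$, so that a hyperedge meeting the probes through a vertex of $s$ different from $v$ cannot trigger a spurious positive.
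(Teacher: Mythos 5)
Your proof is correct and follows essentially the same route as the paper's: a per-vertex decomposition of the intersection test via Lemma~\ref{lemma1}, with Observation~\ref{obs1} handling the matching branch and the no-thick-pairs bound of $4\cdot 31$ handling the cover branch, assembled by a disjunction over the three vertices of $s$. The only differences are cosmetic: the paper additionally sets aside a buffer $F_1$ of the at most $90$ hyperedges meeting $s$ in exactly two vertices before applying Lemma~\ref{lemma1} (which is why its bound is $90+3\cdot 124=462$ rather than your $3\cdot 124=372$; this buffer is not actually needed for correctness), and where you say ``choosing these three probes distinct from $s$'' you should, as the paper does, simply apply Lemma~\ref{lemma1} to the hypergraph with $s$ already removed so that the probes are guaranteed to lie in $\mathcal{S}\setminus\{s\}$.
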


\begin{proof}

Let $s$ be any hyperedge in the hypergraph. Since we assume that there are no thick pairs, there are at most $30 \times 3$ hyperedges in $(V,\mathcal{S})$ that intersect $s$ in exactly 2 vertices.
We denote this set of at most 90 hyperedges by $F_1$. Let $v \in s$, and consider the hyperedges in $(V, \mathcal{S} \backslash (F_1 \cup \{s\}))$ that contain vertex $v$. 
By Lemma~\ref{lemma1} we can distinguish between the following two cases.
\begin{itemize}
\item Assume there exist 3 hyperedges $s_1, s_2, s_3$ in $(V, \mathcal{S} \backslash (F_1 \cup \{s\}))$ that pairwise intersect at vertex $v$ only.
In this case, we denote by $F_2$ the set of at most 11 hyperedges consisting of $s_1, s_2, s_3$ and all the hyperedges in $\mathcal{S}$ that have exactly one vertex in each of 
$s_1 \backslash \{v\}$, $s_2 \backslash \{v\}$ and $s_3 \backslash \{v\}$. According to Observation~\ref{obs1} we can determine whether a given hyperedge $s'\in \mathcal{S} \backslash (F_1 \cup F_2 \cup \{s\})$ 
contains $v$ or not by looking at the intersection of $s'$  with $s_1, s_2, s_3$. 
\item Suppose now that there exists a set of vertices $\{v_1, v_2, v_3, v_4\}$ such that every 
hyperedge in $(V, \mathcal{S} \backslash (F_1 \cup \{s\}))$ that contains $v$ also contains at least one of $v_1, v_2, v_3$ or $v_4$. 
In this case, we denote by $F_2$ the set of all the hyperedges that contain at least one of the pairs $\{v, v_1\}$, $\{v, v_2\}$, $\{v, v_3\}$ or $\{v, v_4\}$. 
By our assumption on no thick pairs, the set $F_2$ contains at most $31\times 4=124$ edges. Observe that no hyperedge $s' \in \mathcal{S} \backslash (F_1 \cup F_2 \cup \{s\})$ intersects $v$. 
\end{itemize}
By analogy with building the set $F_2$ for the vertex $v$, we build two more 
sets $F_3$ and $F_4$ for the other two vertices contained in the hyperedge $s$, i.e. for the  vertices in $s\backslash \{v\}$. 
Now it is easy to see that the $F=F_1 \cup F_2 \cup F_3 \cup F_4$ allows us to determine whether a given 
hyperedge $s' \in S \backslash (F \cup \{s\})$ intersects $s$ or not. Note that $F$ has size at most $90+3 \times 124=462$.       
\end{proof}

In our next result, we will show that a 3-uniform hypergraph with a thick pair contains one of the structures presented in Figure~\ref{thickedge}, which we call ``fly'', ``windmill'', and ``broken windmill''.

\begin{figure}[h]
    \centering
    \begin{subfigure}[b]{0.3\textwidth}
    \centering
    \begin{tikzpicture}[scale=.6,auto=left]
		\foreach \start in {0}
		{		
		\node[w_vertex] (1) at (\start+0, 2) { }; 	
		\node[w_vertex] (2) at (\start-1, 0) { };
		\node[w_vertex] (3) at (\start+1, 0) { };

		\node[w_vertex] (4) at (\start+1, 1) { };
		\node[w_vertex] (5) at (\start+2,1) { };
		\node[w_vertex] (6) at (\start+3,1) { };

		\node[w_vertex] (7) at (\start-1,1) { };
		\node[w_vertex] (8) at (\start-2,1) { };
		\node[w_vertex] (9) at (\start-3,1) { };

		\draw (\start+0, 2) node[anchor=south] {$v_{1}$};
		\draw (\start-1, 0) node[anchor=north] {$v_{2}$};
		\draw (\start+1, 0) node[anchor=north] {$v_{3}$};

		\foreach \from/\to in {1/2,2/3,3/1,1/4,1/5,1/6,1/7,1/8, 1/9, 3/4, 3/5, 3/6, 2/7, 2/8, 2/9}
	    	\draw (\from) -- (\to);
		}
          \end{tikzpicture}
    
       \vspace{3mm}

        \caption{A ``fly''}
       
    \end{subfigure}
    ~
    \begin{subfigure}[b]{0.3\textwidth}
    \centering
    \begin{tikzpicture}[scale=.6,auto=left]
		\foreach \start in {0}
		{		
		\node[w_vertex] (1) at (\start+0, 2) { }; 
		\node[w_vertex] (2) at (\start-1, 0) { };
		\node[w_vertex] (3) at (\start+1, 0) { };

               \node[w_vertex] (4) at (\start+0, -0.4) { };
		\node[w_vertex] (5) at (\start+0, -0.8) { };
		\node[w_vertex] (6) at (\start+0, -1.2) { };

                \node[w_vertex] (7) at (\start-2, 2.2) { };
		\node[w_vertex] (8) at (\start-2, 3) { };

		\node[w_vertex] (9) at (\start+2, 2.2) { };
                \node[w_vertex] (10) at (\start+2, 3) { };

		\node[w_vertex] (11) at (\start-0.5, 4) { };
		\node[w_vertex] (12) at (\start+0.5, 4) { };

		\draw (\start+0.4, 2) node[anchor=south] {$v_{1}$};
		\draw (\start-1.2, 0) node[anchor=north] {$v_{2}$};
		\draw (\start+1.2, 0) node[anchor=north] {$v_{3}$};

		\foreach \from/\to in {1/2,2/3,3/1,2/4,2/5,2/6, 3/4, 3/5, 3/6, 1/7, 1/8, 1/9, 1/10, 1/11, 1/12, 7/8, 9/10, 11/12}
	    	\draw (\from) -- (\to);		}
          \end{tikzpicture}

        \caption{A ``windmill''}
        
    \end{subfigure}
    ~ 
    \begin{subfigure}[b]{0.3\textwidth}      
  \centering
  \qquad
  \begin{tikzpicture}[scale=.6,auto=left]
		\foreach \start in {0}
		{		
		\node[w_vertex] (1) at (\start+0, 2) { }; 	
		\node[w_vertex] (2) at (\start-1, 0) { };
		\node[w_vertex] (3) at (\start+1, 0) { };

		\node[w_vertex] (4) at (\start+0, -0.4) { };
		\node[w_vertex] (5) at (\start+0, -0.8) { };
		\node[w_vertex] (6) at (\start+0, -1.2) { };

		\draw (\start+1.3, 2.1) node[anchor=south] {$v_{1}$ {\tiny bounded degree}};
		\draw (\start-1.2, 0) node[anchor=north] {$v_{2}$};
		\draw (\start+1.2, 0) node[anchor=north] {$v_{3}$};

		\foreach \from/\to in {1/2,2/3,3/1,2/4,2/5,2/6, 3/4, 3/5, 3/6}
	    	\draw (\from) -- (\to);

                \draw(1) -- (\start-0.4, 2.2);
		\draw(1) -- (\start-0.3, 2.3);
		\draw(1) -- (\start-0.1, 2.3);
 		\draw(1) -- (\start+0.1, 2.3);
                \draw(1) -- (\start+0.3, 2.3);
		\draw(1) -- (\start+0.4, 2.2);

		}
          \end{tikzpicture}

        \caption{A ``broken windmill''}
       
    \end{subfigure}
    \caption{Substructures that appear in a 3-uniform hypergraph with a thick pair}\label{thickedge}
\end{figure}
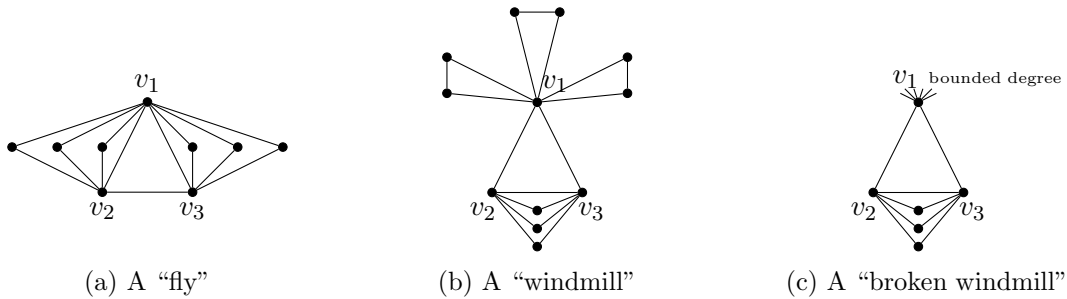

To prove the result about these three structures, we need the following observation. 

\begin{obs}\label{obs3}
Let $(V,\mathcal{S})$ be a 3-uniform hypergraph and let $v \in V$ be a vertex that does not belong to any thick pair. Then one of the following holds:
\begin{itemize}
\item Either there are 3 hyperedges $s_1, s_2, s_3$ that pairwise intersect only at vertex $v$.
\item Or vertex $v$ is contained in at most 124 hyperedges of $(V,\mathcal{S})$. 
\end{itemize}
\end{obs}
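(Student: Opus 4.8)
The plan is to obtain Observation~\ref{obs3} as a direct consequence of Lemma~\ref{lemma1} together with the hypothesis that $v$ lies in no thick pair. First I would simply apply Lemma~\ref{lemma1} to the vertex $v$. That lemma presents a dichotomy: either there exist three hyperedges $s_1,s_2,s_3$ with $s_i\cap s_j=\{v\}$ for all $1\le i<j\le 3$, or there exist four vertices $v_1,v_2,v_3,v_4$ such that every hyperedge containing $v$ also contains at least one of them. The first alternative is literally the first bullet of Observation~\ref{obs3} (three hyperedges pairwise intersecting only at $v$), so that case requires no further work.

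The substance of the argument is then to show that the second alternative of Lemma~\ref{lemma1} forces the second bullet, namely that $v$ lies in at most $124$ hyperedges. The key observation is a translation from a covering statement into a counting bound: if every hyperedge containing $v$ also contains one of $v_1,v_2,v_3,v_4$, then every such hyperedge contains one of the four pairs $\{v,v_1\},\{v,v_2\},\{v,v_3\},\{v,v_4\}$. By the definition of a thick pair, a pair that is \emph{not} thick is contained in at most $31$ hyperedges; and since by hypothesis $v$ belongs to no thick pair, each of the four pairs $\{v,v_i\}$ is contained in at most $31$ hyperedges. Summing over the four pairs, the number of hyperedges that contain $v$ is therefore at most $4\times 31 = 124$, which is exactly the bound in the statement.

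I do not expect any serious obstacle here; the only point to handle carefully is the bookkeeping in the counting step. A single hyperedge may contain more than one of the $v_i$, so the four pair-counts can overcount; but since we only need an upper bound, overcounting is harmless, as each hyperedge containing $v$ is counted at least once across the four pairs. (One should also note that the $v_i$ produced by Lemma~\ref{lemma1} are vertices distinct from $v$, as they arise as endpoints of a matching in the auxiliary graph $(V,\mathcal E)$ in that lemma's proof, so the pairs $\{v,v_i\}$ are genuine pairs to which the no-thick-pair hypothesis applies.) Assembling the two cases then completes the proof.
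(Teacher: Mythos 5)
Your proposal is correct and is essentially identical to the paper's own proof: apply Lemma~\ref{lemma1} to $v$, note the first alternative is the first bullet, and in the second alternative bound the number of hyperedges through $v$ by $4\times 31=124$ using that none of the pairs $\{v,v_i\}$ is thick. Your extra remarks on overcounting are fine but not needed, since only an upper bound is required.
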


\begin{proof}
From Lemma~\ref{lemma1}, it follows that either there are 3 hyperedges $s_1, s_2, s_3$ that pairwise intersect only at vertex $v$,   
or there are 4 vertices $v_1, v_2, v_3$ and $v_4$ such that every hyperedge that contains $v$ also contains at least one of $v_1, v_2, v_3$ or $v_4$.
Note that in the second case, since neither of $vv_1, vv_2, vv_3$ and $vv_4$ is thick, there are at most 31 hyperedges containing one of these pairs. 
Therefore, there are at most $31 \times 4 = 124$ hyperedges that contain $v$. This finishes the proof of the observation. 
\end{proof}

\begin{lemma}\label{lemmawiththick}
Let $(V, \mathcal{S})$ be a 3-uniform hypergraph that contains a thick pair. Then it contains one of the following:

\begin{itemize}

\item A ``fly'', which is a hyperedge $s=\{v_1, v_2, v_3\}$ together with hyperedges $s_1, s_2, s_3, s_4, s_5, s_6$ such that $s_1, s_2, s_3$ intersect $s$ at $\{v_1,v_2\}$ and $s_4, s_5, s_6$ intersect $s$ at $\{v_1, v_3\}$. 

\item A ``windmill'', which is a hyperedge $s=\{v_1, v_2, v_3\}$ together with hyperedges $s_1, s_2, s_3, s_4, s_5$, $s_6$ such that 
$s_1, s_2, s_3$ intersect $s$ at $\{v_2, v_3\}$ and $s_4, s_5, s_6$ intersect $s$ at $v_1$ and the pairwise intersection of $s_4, s_5, s_6$ is vertex $v_1$ as well.

\item A ``broken windmill'', which is a hyperedge $s=\{v_1, v_2, v_3\}$ together with hyperedges $s_1, s_2, s_3$ such that 
$s_1, s_2, s_3$ intersect $s$ at $\{v_2, v_3\}$ and there are only at most $124$ hyperedges in $(V, \mathcal{S}\backslash \{s\})$ that contain vertex $v_1$.
\end{itemize}
\end{lemma}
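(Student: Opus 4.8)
The plan is to fix a thick pair $\{v_2,v_3\}$ with partner set $D=\{w:\{v_2,v_3,w\}\in\mathcal{S}\}$, where $|D|\ge 32$, and to reduce everything to the analysis of a single partner $v_1\in D$. For any partner $v_1$, the hyperedge $s=\{v_1,v_2,v_3\}$ together with three further hyperedges through $\{v_2,v_3\}$ (available since $|D|\ge 4$) already form the common part of both a windmill and a broken windmill, so the only remaining task is to understand the hyperedges incident to $v_1$. I would also record the following reformulation: a fly is present precisely when some hyperedge carries two pairs each lying in at least $4$ hyperedges (call such a pair \emph{rich}), since the two rich pairs of a triple necessarily share a vertex, which plays the role of the hub $v_1$. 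In particular, as $\{v_2,v_3\}$ is rich, if either $\{v_1,v_2\}$ or $\{v_1,v_3\}$ is rich then $s$ already carries two rich pairs and we have a fly; so in the remaining analysis both $\{v_1,v_2\}$ and $\{v_1,v_3\}$ lie in at most $3$ hyperedges.

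First I would dispose of a partner $v_1$ using Lemma~\ref{lemma1} and Observation~\ref{obs3} applied to $v_1$. If $v_1$ lies in at most $124$ hyperedges, then it lies in at most $124$ hyperedges of $(V,\mathcal{S}\setminus\{s\})$ and we obtain a broken windmill. Otherwise I would run the maximal‑matching argument of Lemma~\ref{lemma1} inside the auxiliary graph $\mathcal{E}_{v_1}=\{t\setminus\{v_1\}:t\ni v_1\}$ with the two vertices $v_2,v_3$ deleted: a matching of size $3$ there yields three hyperedges through $v_1$ that pairwise meet only at $v_1$ and avoid $\{v_2,v_3\}$, hence meet $s$ only at $v_1$, which is exactly a windmill. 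If instead no such matching exists, a vertex cover of size at most $4$ (none of its vertices being $v_2$ or $v_3$) forces the large degree of $v_1$ to concentrate on a few pairs $\{v_1,c_i\}$; combined with the fact that $\{v_1,v_2\},\{v_1,v_3\}$ are not rich, this pushes one pair $\{v_1,c_i\}$ to carry very many hyperedges, and in the genuinely obstructed case that pair is itself thick.

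The main obstacle is precisely this last situation: a partner $v_1$ that is forced to lie in a thick pair $\{v_1,c_i\}$, so that neither the degree bound (broken windmill) nor a clean sunflower (windmill) is available at $v_1$ itself. My plan here is a descent over thick pairs: pass from $\{v_2,v_3\}$ to the new thick pair $\{v_1,c_i\}$ and repeat the analysis on one of \emph{its} partners. Since the hypergraph is finite, I would formalise this as an extremal choice of thick pair that is terminal under this ``concentration'' relation, and argue that the descent cannot continue indefinitely without exposing a partner of low degree or a clean sunflower, i.e.\ a broken windmill or a windmill. The only configurations that can block the descent from terminating are those in which two rich pairs are forced into a common hyperedge, and these are exactly the flies. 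Making the descent measure precise, and verifying that the sole obstruction to termination is a fly, is the delicate heart of the argument; it is also where the constants are calibrated, with $32$ the thickness threshold and $124=4\cdot 31$ arising from four covering vertices each contributing at most $31$ hyperedges through a non‑thick pair, so that the three outcomes exhaust all cases.
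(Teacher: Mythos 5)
Your route is genuinely different from the paper's, and it has a real gap at exactly the point you flag as ``the delicate heart''. The paper avoids any local analysis at a single partner of a single thick pair by a global double count: it considers all incidences $(v,e)$ where $e$ is a thick pair and $\{v\}\cup e\in\mathcal{S}$; after disposing (via Observation~\ref{obs3}) of the case where some such $v$ lies in no thick pair --- which yields a windmill or a broken windmill, much as in your first step --- it bounds the number of incidences below by $32|E|\ge 16|W|$ and concludes by pigeonhole that some single vertex $v$ forms hyperedges with $16$ distinct thick pairs. If four of those pairs share a common vertex, a fly appears; otherwise the $16$ pairs form a graph of maximum degree at most $3$, which contains a matching of size $4$, and four pairwise disjoint thick pairs through $v$ give a windmill. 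Your proposal replaces this with a ``descent'' over thick pairs, but you never exhibit a decreasing measure: finiteness of the hypergraph does not by itself provide a terminal element of your concentration relation, since that relation can perfectly well have cycles (thick pairs concentrating onto one another in a loop). The assertion that ``the sole obstruction to termination is a fly'' is precisely the content that needs proof, and nothing in the proposal supplies it.

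There is also a quantitative problem before the descent even starts. In your obstructed case $v_1$ lies in at least $125$ hyperedges, of which at most $3+3-1=5$ meet $\{v_2,v_3\}$ (since neither $\{v_1,v_2\}$ nor $\{v_1,v_3\}$ is rich), so at least $120$ hyperedges survive in the reduced link of $v_1$; a vertex cover $\{c_1,\dots,c_4\}$ then only forces some pair $\{v_1,c_i\}$ into at least $\lceil 120/4\rceil=30$ hyperedges, which falls short of the thickness threshold $32$. A configuration in which each of the four cover vertices carries $31$ hyperedges with $v_1$ therefore produces no broken windmill, no size-$3$ matching, no fly, and no new thick pair to descend to, so even the setup for the descent fails. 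The parts of your argument that do work --- the observation that two rich pairs on one hyperedge yield a fly, and the matching/degree dichotomy at a partner --- are sound, but to close the proof you should either recalibrate all the thresholds and genuinely construct a terminating potential function, or abandon the local analysis in favour of the paper's global counting argument.
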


\begin{proof}
Let $T=\{(v, v_1v_2): \{v, v_1, v_2\} \in \mathcal{S}, v_1v_2$ is a thick pair$\}$. The main idea of the proof is counting the elements in $T$.
Let $E$ be the set of thick pairs and let $W=\{v \in V :  (v, v_1v_2) \in T$ for some $ v_1, v_2 \in V\}$. 

Note that any thick pair belongs to at least 32 hyperedges, hence $|T| \geq 32 |E|$. 
Also note that each vertex of $W$ belongs to some thick pair, or else a ``windmill'' or a ``broken windmill'' appears. 
Indeed, assume $s=\{v, v_1, v_2\}$ is a hyperedge with a thick pair $v_1v_2$ and $v$ does not belong to any thick pair. 
Then, by Observation~\ref{obs3}, either there are at most 124 hyperedges containing vertex $v$ in $(V, \mathcal{S} \backslash \{s\})$
or there are three hyperedges $s_1$, $s_2$, $s_3$ in $(V, \mathcal{S} \backslash \{s\})$ that pairwise intersect only at vertex $v$. 
Together with any three hyperedges different from $s$ that contain the vertices of the thick pair $\{v_1,v_2\}$ and are different from $s$, this gives us either a ``windmill'' or a ``broken windmill''. 

Thus, from now on, we will assume that each vertex of $W$ belongs to a thick pair. As each thick pair contains
at most two vertices, we have $|W| \leq 2 |E|$. We conclude that $|T| \geq 32 |E| \geq 16 |W|$. Hence, by the pigeonhole principle, there are 16 elements of 
$T$ that contain the same element $v \in W$. In other words, we have 16 different thick pairs $e_1, e_2, \ldots, e_{16}$ that make a hyperedge with vertex $v$, i.e. $(v, e_i) \in T$ for all $i$. 

Consider first the case when four of these pairs, say $e_1, e_2, e_3, e_4$, have a vertex $w$ in common. 
Denote these pairs by $e_1=wz$, $e_2=wu_1$, $e_3=wu_2$ and $e_4=wu_3$. Then $s=\{v,w,z\}$, with $s_1=\{v,w,u_1\}$, $s_2=\{v,w,u_2\}$, $s_3=\{v,w,u_3\}$, 
together with any three hyperedges that contain the thick pair $e_1=wz$ and are different from $s$, gives us a ``fly''. 

Finally, consider the case when no four pairs of $e_1, e_2, \ldots, e_{16}$ share a vertex in common. Then a graph $G$ with the edges set $\{e_1, e_2, \ldots, e_{16}\}$
has degree at most 3 and hence it must contain  a matching of size at least 4.  Indeed, by observing that each edge of $G$ is incident to at most 4 other edges, 
one can pick any edge and remove all incident edges repeatedly at least 4 times to obtain the required matching of size 4.  
Now, as each of these four edges is a thick pair and forms a hyperedge with $v$, we can easily see that a ``windmill'' appears. This finishes the proof.
\end{proof}

\begin{cor} \label{cor1}
Let $(V, \mathcal{S})$ be a 3-uniform hypergraph that contains a thick pair. Then there is a hyperedge $s \in \mathcal{S}$ and a set of hyperedges 
$F \subset \mathcal{S} \backslash \{s\}$ of size $|F| \leq 128$ such that such that 
for any $s' \in \mathcal{S} \backslash (F \cup \{s\})$, one can determine whether $s'$ intersects $s$ by looking at the intersections of  $s'$ with the hyperedges of $F$.    
\end{cor}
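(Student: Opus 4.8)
The plan is to leverage the structural dichotomy established in Lemma~\ref{lemmawiththick}: since $(V, \mathcal{S})$ contains a thick pair, the hypergraph must contain a ``fly'', a ``windmill'', or a ``broken windmill''. For each of these three configurations, I would exhibit a single distinguished hyperedge $s$ and a small certifying set $F$ of hyperedges so that the intersection of an arbitrary $s'$ with $s$ is determined by its intersections with members of $F$. The work therefore splits into three cases, mirroring the three substructures, and in each case the $s$ of the configuration is the natural choice of distinguished hyperedge.

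In each case I would separately determine, for a vertex $v_i$ of $s$, whether $s'$ meets $s$ at $v_i$, and then combine the three answers by a Boolean OR to decide whether $s'$ intersects $s$ at all. The key technical tools are Observations~\ref{obs1} and \ref{obs2}: three hyperedges pairwise intersecting exactly at a common vertex $v$ let us detect containment of $v$ (up to the exceptional set $F'$ of $8$ hyperedges), while three hyperedges pairwise intersecting in a common pair let us detect containment of either vertex of that pair (up to the single exceptional hyperedge). Concretely, for the fly, the pairs $\{v_1,v_2\}$ and $\{v_1,v_3\}$ are each witnessed by three hyperedges, so Observation~\ref{obs2} applied twice detects meeting $s$ at $v_1$, $v_2$, or $v_3$. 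For the windmill, the pair $\{v_2,v_3\}$ is witnessed by $s_1,s_2,s_3$ (Observation~\ref{obs2}), and vertex $v_1$ is witnessed by $s_4,s_5,s_6$ whose pairwise intersection is exactly $v_1$ (Observation~\ref{obs1}). For the broken windmill, $\{v_2,v_3\}$ is handled as before, while for $v_1$ we simply add to $F$ all at most $124$ hyperedges containing $v_1$: any $s'$ outside $F$ cannot meet $s$ at $v_1$, so the contribution of $v_1$ is forced to be ``no''.

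The size accounting is then routine: each application of Observation~\ref{obs2} contributes the three witnessing hyperedges plus one exceptional hyperedge, and each application of Observation~\ref{obs1} contributes three witnessing hyperedges plus the eight exceptional ones, while the broken-windmill branch contributes at most $124$ hyperedges, together with $s_1,s_2,s_3$ and their single exceptional hyperedge. In every case the total stays within the claimed bound $|F| \le 128$; I would simply verify the worst case (the broken windmill, at roughly $124 + 4$) and note the other two cases are smaller. I expect the main obstacle to be bookkeeping rather than conceptual: one must be careful that the exceptional sets $F'$ from the two observations are included in $F$, so that the ``looking at intersections with $F$'' really does determine the answer for \emph{all} $s' \in \mathcal{S}\backslash(F\cup\{s\})$, and one must confirm that the Boolean combination of the three per-vertex tests is itself a function of the intersection pattern with $F$. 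Once the certifying set correctly absorbs all exceptions, the corollary follows directly from Lemma~\ref{lemmawiththick}.
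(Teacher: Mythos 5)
Your proposal is correct and follows essentially the same route as the paper's proof: case analysis over the fly, windmill and broken windmill from Lemma~\ref{lemmawiththick}, applying Observations~\ref{obs1} and~\ref{obs2} per vertex with their exceptional hyperedges absorbed into $F$, and the same size accounting ($8$, $15$, and $124+4=128$ respectively). No gaps.
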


\begin{proof}
We use the notation of the statement of Lemma~\ref{lemmawiththick}. Let $s$ be a hyperedge given by Lemma~\ref{lemmawiththick} belonging either to a ``fly'' or to a ``windmill'' or to a ``broken windmill''.

If $s$ belongs to a ``fly'', then we can take $F$ to consist of 6 hyperedges $s_1, s_2, \ldots, s_6$ together with 2 further possible hyperedges on the wings of the fly 
(if the hypergraph contains it) on vertices $(s_1 \cup s_2 \cup s_3) \backslash \{v_1,v_2\}$ and $(s_4 \cup s_5 \cup s_6) \backslash \{v_1,v_3\}$. 
It now follows from Observation~\ref{obs2} that intersecting any hyperedge $s' \in S \backslash (F \cup s)$, with $s_1, s_2$ and $s_3$ one can determine whether $s'$ contains either $v_1$ or $v_2$. 
Similarly, intersecting with $s_4, s_5, s_6$, determines whether $s'$ contains either $v_1$ or $v_3$. Hence, by looking at the intersection of the edges of $F$ with $s'$ we can determine whether $s'$ intersects $s$ or not.

If $s$ belongs to a ``windmill'', we can take $F$ to consist of 6 hyperedges $s_1, s_2, \ldots, s_6$ together with one possible hyperedge on $(s_1 \cup s_2 \cup s_3) \backslash \{v_2, v_3\}$ and 
8 possible hyperedges on $(s_4 \cup s_5 \cup s_6) \backslash \{v_1\}$ that have one vertex in each wing of the ``windmill''. By Observation~\ref{obs2} intersection of $s'$ with $s_1,s_2, s_3$ determines whether 
$s'$ contains either $v_2$ or $v_3$, while Observation~\ref{obs1} allows us to determine whether $s'$ contains $v_1$ by looking at the intersection of $s'$ with $s_4, s_5$ and $s_6$. 
Thus, again we can determine whether $s'$ intersects $s$. 

If $s$ belongs to a ``broken windmill'', we can take $F$ to consist of all the hyperedges that contain vertex $v_1$, of  which there are at most 124, 
also with $s_1, s_2, s_3$ and one further possible hyperedge on the set  $(s_1 \cup s_2 \cup s_3) \backslash \{v_2, v_3\}$. 
Now $F$ contains at most 128 edges and it is clear by Observation~\ref{obs2} that this set determines whether $s'$ intersects $s$ or not.
\end{proof}

From Lemma~\ref{lemmawithoutthick} and Corollary~\ref{cor1} we deduce our main result of this section.
\begin{theorem}
Intersection graphs of 3-uniform hypergraphs have functionality bounded by 462.
\end{theorem}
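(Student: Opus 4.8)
The plan is to deduce the theorem directly from Lemma~\ref{lemmawithoutthick} and Corollary~\ref{cor1}, which between them handle the two exhaustive cases (no thick pair versus at least one thick pair), by translating their hypergraph statements into statements about functionality and then invoking heredity. First I would record that the class of intersection graphs of 3-uniform hypergraphs is hereditary: deleting a vertex of such a graph amounts to discarding the corresponding hyperedge from $\mathcal{S}$, and the result is again the intersection graph of a 3-uniform hypergraph (on the sub-family of hyperedges). Since $fun(G)=\max_H\min_{y}fun(y)$, it therefore suffices to show that \emph{every} intersection graph of a 3-uniform hypergraph contains a vertex of functionality at most 462.

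The key bridge is the following reading of the two preceding results. Each of them supplies, for a suitable hyperedge $s$, a set $F\subset\mathcal{S}\backslash\{s\}$ such that for every $s'\in\mathcal{S}\backslash(F\cup\{s\})$ the question of whether $s$ intersects $s'$ is determined purely by the pattern of intersections of $s'$ with the members of $F$. In the intersection graph $H$, let $y$ be the vertex corresponding to $s$ and let $x_1,\ldots,x_{|F|}$ be the vertices corresponding to the hyperedges of $F$. Then for any vertex $z\notin\{y,x_1,\ldots,x_{|F|}\}$ the adjacency $A(y,z)$ equals a fixed Boolean function of $A(x_1,z),\ldots,A(x_{|F|},z)$, which is exactly the statement that $y$ is a function of $x_1,\ldots,x_{|F|}$; hence $fun(y)\le|F|$.

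With this in hand I would split on the existence of a thick pair. If $(V,\mathcal{S})$ has no thick pair, Lemma~\ref{lemmawithoutthick} yields such an $F$ with $|F|\le 462$ for every hyperedge, so every vertex of $H$ has functionality at most 462. If $(V,\mathcal{S})$ has a thick pair, Corollary~\ref{cor1} produces one hyperedge $s$ with an associated $F$ of size at most 128, giving a vertex of functionality at most $128\le 462$. Either way $H$ has a vertex of functionality at most 462, and by heredity $fun(G)\le 462$, the value being dictated by the worse of the two constants (the no-thick-pair case).

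There is essentially no genuine obstacle here beyond the two earlier results; the only points needing care are bookkeeping. The first is to check that the set excluded in the hypergraph statements, namely $F\cup\{s\}$, corresponds precisely to the set $\{y,x_1,\ldots,x_k\}$ excluded in the definition of $fun(y)$, so that the Boolean function is required to be correct on exactly the right vertices. The second is simply that the overall bound is the maximum of the two constants $462$ and $128$ arising in the two cases.
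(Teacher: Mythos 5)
Your proposal is correct and follows exactly the route the paper intends: the paper states the theorem as an immediate deduction from Lemma~\ref{lemmawithoutthick} and Corollary~\ref{cor1} without writing out the details, and your argument (translating ``intersection with $F$ determines intersection with $s$'' into $fun(y)\le|F|$, splitting on the existence of a thick pair, and invoking heredity) is precisely the omitted bookkeeping. No issues.
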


%%%%%%%%%%%%%%%%%%%%%%%%%%%%%%%%%%%%%%%%%%%%%%%%%%%%%%%%%%%%%%%%%%%%%%%%%%%%%%%%%%%%%%%%%%%%%%%%%%%%%%%%%%%%%%%%%%%%%%%

\section{Graphs of large functionality}
\label{sec:unbounded}
%%%%%%%%%%%%%%%%%%%%%%%%%%%%%%%%%%%%%%%%%%%%%%%%%%%%%%%%%%%%%%%%%%%%%%%%%%%%%%%%%%%%%%%%%%%%%%%%%%%%%%%%%%%%%%%%%%%%%%%
Knowing what is good without knowing what is bad is just half-knowledge. Therefore, in this section we turn to graphs of large functionality.

When we talk about graphs of large functionality we assume that we deal with an infinite family $X$ of graphs, because in any finite collection of graphs functionality is bounded by a constant. 
Moreover, we can further assume that $X$ is hereditary. Indeed, if $X$ is not hereditary, we can extend it to a hereditary class by adding 
all induced subgraphs of graphs in $X$, and this extension has (un)bounded functionality if and only if $X$ has, because by definition the functionality of an induced subgraph of a  graph $G$ is never 
larger than the functionality of $G$. 

In \cite{implicit}, it was shown that any hereditary class of graphs of bounded functionality has $2^{O(n\log_2 n)}$ labelled graphs with $n$ vertices. 
In the terminology of \cite{SpHerProp} these are classes with (at most) factorial speed of growth, or simply (at most) factorial classes. 
Therefore, in every superfactorial class functionality is unbounded. This is the case, for instance, for bipartite, co-bipartite and split graphs, 
since each of these classes contains at least $2^{n^2/4}$ labelled graphs with $n$ vertices. We state this formally as a lemma.

\begin{lemma}\label{lem:three-classes}
Functionality is unbounded in the  classes of bipartite, co-bipartite and split graphs.  
\end{lemma}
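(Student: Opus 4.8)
The plan is to use the contrapositive of the result from \cite{implicit} recalled just above the statement: a hereditary class of bounded functionality contains only $2^{O(n\log_2 n)}$ labelled graphs on $n$ vertices. All three classes in question are hereditary, so it suffices to show that each of them contains strictly more than factorially many labelled graphs on $n$ vertices. Concretely, I would establish the explicit lower bound of $2^{\lfloor n^2/4\rfloor}$ labelled graphs in each class, which already dwarfs $2^{O(n\log_2 n)}$ and therefore forces functionality to be unbounded.

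First I would handle bipartite graphs. Fix a vertex set of size $n$ and partition it into two parts $A$ and $B$ with $|A|=\lfloor n/2\rfloor$ and $|B|=\lceil n/2\rceil$. Putting no edges inside $A$ or inside $B$ and choosing an arbitrary subset of the $|A|\cdot|B|=\lfloor n^2/4\rfloor$ potential $A$--$B$ edges yields a bipartite graph, and distinct edge subsets give distinct labelled graphs. This produces at least $2^{\lfloor n^2/4\rfloor}$ labelled bipartite graphs on $n$ vertices.

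For split graphs the same construction works almost verbatim: now make $A$ a clique and $B$ an independent set, and again let the $A$--$B$ edges range freely over all $2^{\lfloor n^2/4\rfloor}$ possibilities. Each choice gives a split graph, and different choices give different labelled graphs. For co-bipartite graphs I would either repeat the count (make both $A$ and $B$ cliques and vary the $A$--$B$ edges), or, more slickly, observe that complementation is a bijection between labelled bipartite and labelled co-bipartite graphs on the same vertex set, so the two classes contain equally many labelled graphs; either way the bound $2^{\lfloor n^2/4\rfloor}$ carries over.

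Combining these counts with the factorial upper bound yields the desired contradiction with bounded functionality in each of the three cases, completing the proof. There is no genuine obstacle here: the only points meriting a moment's care are that the exhibited graphs indeed belong to their respective classes and are pairwise distinct as labelled graphs, and that $2^{\lfloor n^2/4\rfloor}$ truly outgrows $2^{O(n\log_2 n)}$ --- all of which are immediate.
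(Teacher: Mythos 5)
Your proposal is correct and follows exactly the paper's argument: the paper also derives the lemma from the $2^{O(n\log_2 n)}$ upper bound of \cite{implicit} for hereditary classes of bounded functionality, combined with the observation that each of the three classes contains at least $2^{n^2/4}$ labelled graphs on $n$ vertices. Your explicit constructions just spell out the details of that count.
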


This conclusion allows us to establish a relationship between functionality and one more important graph parameter known as VC-dimension. 

A set system $(X,S)$ consists of a set $X$ and a family $S$ of subsets of $X$. 
A subset $A\subseteq X$ is {\it shattered} if for every subset $B\subseteq A$
there is a set $C\in S$ such that $B=A\cap C$. The VC-dimension of $(X,S)$
is the cardinality of a largest shattered subset of $X$.

The VC-dimension of a graph $G=(V,E)$ was defined in \cite{VC} as the VC-dimension of 
the set system $(V,S)$, where $S$ the family of closed neighbourhoods of vertices of $G$,
i.e. $S=\{N[v]\ :\ v\in V(G)\}$. We denote the VC-dimension of $G$ by $vc(G)$.

\begin{theorem}
There exists a function $f$ such that for any graph $G$, $vc(G)\le f(fun(G))$.
\end{theorem}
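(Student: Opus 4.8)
The plan is to prove the contrapositive in its class-level form: rather than bounding $vc(G)$ directly in terms of $fun(G)$, I will show that the hereditary class of all graphs of functionality at most $k$ has VC-dimension bounded by a constant depending only on $k$, and then read off the function $f$. The engine of the argument is the classification of \cite{Lozin}, which identifies the three minimal hereditary classes of unbounded VC-dimension, together with Lemma~\ref{lem:three-classes}, which tells us that functionality is unbounded in each of them.

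First I would record that functionality is monotone under taking induced subgraphs: if $G'$ is an induced subgraph of $G$, then every induced subgraph of $G'$ is also an induced subgraph of $G$, so the maximum defining $fun(G')$ is taken over a subfamily of the one defining $fun(G)$, whence $fun(G') \le fun(G)$. Consequently, for each $k$ the class $\mathcal{F}_k := \{G : fun(G) \le k\}$ is hereditary. I would then set
\[
f(k) := \sup\{\, vc(G) : G \in \mathcal{F}_k \,\}
\]
and aim to show $f(k) < \infty$ for every $k$; this suffices, since for an arbitrary graph $G$ we may take $k = fun(G)$ to obtain $vc(G) \le f(fun(G))$.

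The key step is to argue by contradiction. Suppose $f(k) = \infty$ for some $k$, i.e.\ the hereditary class $\mathcal{F}_k$ has unbounded VC-dimension. By the result of \cite{Lozin}, a hereditary class of unbounded VC-dimension must contain one of the three minimal such classes --- bipartite, co-bipartite, or split graphs --- as a subclass. But by Lemma~\ref{lem:three-classes} each of these three classes has unbounded functionality, and hence so does $\mathcal{F}_k$. This contradicts the fact that, by definition, every graph in $\mathcal{F}_k$ has functionality at most $k$. Therefore $f(k)$ is finite for all $k$, and $f$ is the desired function.

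I expect the only real subtlety to be organisational rather than mathematical, since the substantive content is supplied by \cite{Lozin} and Lemma~\ref{lem:three-classes}: one must be careful to invoke the dichotomy for the hereditary class $\mathcal{F}_k$ (not for $G$ itself) and to get the direction of the implication right. I would also note that this route yields only the \emph{existence} of $f$, not an explicit bound; extracting a quantitative $f$ would instead require a direct argument showing that a shattered set of size $d$ forces an induced subgraph in which every vertex has large functionality. The hard part there would be to control the adjacencies \emph{within} the set of $2^d$ witnessing vertices (for instance by a Ramsey-type cleanup) while retaining the complete trace pattern on the shattered set, and it is precisely this loss of control over the internal structure that makes the cited classification the cleaner tool for an existence statement.
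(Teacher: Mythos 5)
Your proposal is correct and follows essentially the same route as the paper: both consider the hereditary class of graphs with functionality at most $k$, assume its VC-dimension is unbounded, invoke the classification from \cite{Lozin} to find one of the three minimal classes (bipartite, co-bipartite, split) inside it, and derive a contradiction with Lemma~\ref{lem:three-classes}. Your closing remarks about the non-explicitness of $f$ match the paper, which leaves an explicit bound as an open problem.
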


\begin{proof}
Fix a $k$ and consider the class $X_k$ of all graphs of functionality at most $k$. Clearly, $X_k$ is hereditary.
Assume $X_k$ contains graphs of arbitrarily large VC-dimension and let $G_1,G_2,\ldots$ be an infinite sequence of graphs from $X_k$ with strictly increasing values of the VC-dimension. 
Let $Y$ be the hereditary class containing all these graphs and all their induced subgraphs. Then $Y$ is a hereditary subclass of $X_k$ with unbounded VC-dimension. 
It is was shown in \cite{Lozin} that the only minimal hereditary classes of graph of unbounded VC-dimension are bipartite, co-bipartite and split graphs. But then $Y$ and hence $X_k$ 
contains one of these three classes, which is a contradiction to Lemma~\ref{lem:three-classes}.
Therefore, there is a constant $f(k)$ bounding the VC-dimension of graphs in $X_k$, which defines the function $f$.
\end{proof}

This theorem shows that the family of classes of bounded VC-dimension extends the family of classes of bounded functionality. 
The next result shows that this extension is proper and reveals several classes of unbounded functionality and bounded VC-dimension. 

\begin{theorem}\label{thm:five-classes}
The following classes have bounded VC-dimension and unbounded functionality:
\begin{itemize}
\item chordal bipartite graphs,
\item complements of chordal bipartite graphs,
\item strongly chordal graphs,
\item bipartite graphs of girth at least $k$, for a fixed value of $k$,
\item finitely defined monotone classes containing all forests.
\end{itemize}
\end{theorem}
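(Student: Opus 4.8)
The plan is to prove the two assertions---bounded VC-dimension and unbounded functionality---by separate and largely uniform arguments. For the VC-dimension half I would invoke \cite{Lozin} directly: a hereditary class has unbounded VC-dimension exactly when it contains \emph{all} bipartite, \emph{all} co-bipartite, or \emph{all} split graphs, so it suffices to exhibit, for each class, one omitted member of each family. This is routine. Chordal bipartite graphs and bipartite graphs of girth at least $k$ (for $k\ge 5$) are triangle-free and omit $C_4$, hence contain neither all bipartite graphs nor any graph with a triangle (killing co-bipartite and split). Strongly chordal graphs are chordal and sun-free, so they miss $C_4\in\text{bipartite}$, $\overline{C_6}\in\text{co-bipartite}$ and the $3$-sun $\in\text{split}$. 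The complement class is handled by applying the same criterion to complements. This settles bounded VC-dimension for every class for which we claim it (for the last, monotone, bullet the substantive claim is unbounded functionality, and bounded VC-dimension holds whenever the class additionally omits some bipartite graph, again by \cite{Lozin}).

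The functionality half rests on the principle recorded before Lemma~\ref{lem:three-classes}: by \cite{implicit} a class of bounded functionality is at most factorial, so any class containing $2^{\omega(n\log_2 n)}$ labelled $n$-vertex graphs has unbounded functionality. I would use this directly for the last two bullets. For bipartite graphs of girth at least $k$, fix an $n$-vertex bipartite graph $H_n$ of girth at least $k$ with $\Omega(n^{1+\varepsilon})$ edges (such graphs exist for every fixed $k$ by the deletion method, e.g. incidence graphs of generalized polygons when $k$ permits). Deleting edges can only increase girth and preserves bipartiteness, so all $2^{|E(H_n)|}=2^{\Omega(n^{1+\varepsilon})}$ subgraphs lie in the class, beating $2^{O(n\log_2 n)}$. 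For a finitely defined monotone class $X$ containing all forests, let $g$ be the largest number of vertices among its finitely many minimal forbidden subgraphs; since every forest lies in $X$, each forbidden subgraph carries a cycle, so any graph of girth exceeding $g$ contains none of them and hence lies in $X$. Thus $X$ contains all graphs of girth $>g$, in particular all bipartite ones, and the previous construction shows $X$ is superfactorial---this completes the dichotomy with Theorem~\ref{thm:monotone-bounded}.

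The three remaining classes are the crux, and here the counting shortcut is unavailable: they are tree-like/geometrically structured (chordal bipartite graphs are exactly the totally balanced $0/1$ matrices, and sun-free split graphs carry the same totally balanced data), so they appear to be only factorial and explicit witnesses are needed. Two reductions cut the work to a single case. First, functionality is complement-invariant: if $y$ is described by a Boolean $f$ on inputs $x_1,\dots,x_k$ in $G$, then $g(w_1,\dots,w_k)=1-f(1-w_1,\dots,1-w_k)$ describes $y$ in $\overline{G}$ on the same inputs, and since complementation is a bijection on induced subgraphs we get $fun(\overline{G})=fun(G)$; hence complements of chordal bipartite graphs inherit unbounded functionality. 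Second, a strongly chordal witness can be produced from a chordal bipartite one by turning one side into a clique, which preserves strong chordality through the sun-free $=$ totally balanced correspondence, the functionality being controlled on the side whose adjacencies are unchanged.

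Everything thus reduces to the \textbf{main obstacle}: constructing a family of chordal bipartite (equivalently, totally balanced) graphs of unbounded functionality. Since a vertex has functionality at most its degree and at most the symmetric difference to any near-twin, such witnesses must be \emph{dense} and free of low-symmetric-difference pairs, yet contain no induced cycle of length $\ge 6$---a genuine tension, and I expect most of the effort to go into a recursive or design-based construction of dense totally balanced bipartite graphs in which every induced subgraph still forces some vertex to depend on arbitrarily many others. Once such a family is in hand, heredity of the three classes together with the two reductions above finishes all of them.
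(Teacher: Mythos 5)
Your treatment of the VC-dimension half and of the last two bullets matches the paper: bounded VC-dimension follows from \cite{Lozin} because none of the classes contains all bipartite, all co-bipartite or all split graphs, and for bipartite graphs of large girth and for finitely defined monotone classes containing all forests you correctly combine monotonicity with the existence of $(C_3,\dots,C_k)$-free bipartite graphs with $\Omega(n^{1+\varepsilon})$ edges to get a superfactorial count, hence unbounded functionality via \cite{implicit}. (One small slip: chordal bipartite graphs do \emph{not} omit $C_4$ --- $C_4$ is chordal bipartite; the correct omitted bipartite witness is $C_6$. The conclusion is unaffected.)

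The genuine gap is in the first three bullets, and it comes from a false premise. You assert that chordal bipartite graphs, their complements, and strongly chordal graphs ``appear to be only factorial,'' so that the counting shortcut is unavailable, and you therefore reduce everything to an explicit construction of dense chordal bipartite graphs of unbounded functionality --- a construction you identify as the ``main obstacle'' and never supply. But these classes are \emph{not} factorial: Spinrad \cite{Spinrad} showed that each of them contains $2^{\Theta(n\log_2^2 n)}$ labelled graphs on $n$ vertices, which strictly exceeds the factorial bound $2^{O(n\log_2 n)}$ imposed by bounded functionality. This is exactly how the paper disposes of all three classes in one line, with no explicit witness graphs, no complement-invariance argument, and no chordal-bipartite-to-split reduction needed. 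As written, your proposal proves the theorem only for the last two bullets; the first three remain open in your argument, and the hard combinatorial construction you set up for yourself is unnecessary.
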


\begin{proof}
To see that VC-dimension is bounded in all these classes, observe that none of them contains any of the three minimal classes of unbounded VC-dimension 
(bipartite, co-bipartite and split graphs).

To prove the unboundedness of functionality in these classes, we will show that all of them are superfactorial. 
For the first three classes a superfactorial bound on the number of $n$-vertex labelled graphs was shown in \cite{Spinrad} and it equals $2^{\Theta(n\log^2_2 n)}$.

Now we turn to the last two families of graph classes and observe that all of them are monotone. 
It is known (see e.g. \cite{girth}) that for each $k$ there exist $(C_3,C_4,\ldots,C_k)$-free bipartite graphs with $n$ vertices and $\Omega (n^{1+1/k})$ edges.
Since the class of $(C_3,C_4,\ldots,C_k)$-free bipartite graphs is monotone, we conclude that the number of $n$-vertex labelled graphs  in this class is at least $2^{\Omega (n^{1+1/k})}$,
i.e. the class is superfactorial. 

Finally, let $X$ be a finitely defined monotone class containing all forests. Since $X$ contains all forests, every forbidden graph for $X$ contains a cycle, and since the number of forbidden graphs is finite, 
there is a largest $k$ such that every forbidden graph contains an induced cycle of length at most $k$. Therefore, $X$ contains all $(C_3,C_4,\ldots,C_k)$-free graphs,
and hence, as before, $X$ contains graphs with $n$ vertices and $\Omega (n^{1+1/k})$ edges. Since $X$ is monotone, it contains at least $2^{\Omega (n^{1+1/k})}$ labelled graphs with $n$ vertices,
i.e. $X$ is superfactorial. 
\end{proof}

We observe that this theorem cannot be extended to the family of all monotone classes, as the example of forests shows. 
Obviously, this class is monotone (and contains all forests), but functionality is bounded by 1 in this class, since every forest has a vertex of degree at most 1.
Nevertheless, in conjunction with Theorem~\ref{thm:monotone-bounded} the above result provides the following dichotomy for finitely defined monotone classes. 
\begin{theorem}
A finitely defined monotone class has bounded functionality if and only if it does not contain all forests. 
\end{theorem}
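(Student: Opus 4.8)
The plan is to combine the two directions already established in the excerpt to obtain the biconditional. The statement concerns a finitely defined monotone class $X$, and claims that $X$ has bounded functionality if and only if $X$ does not contain all forests. Both implications are essentially available from earlier results, so the proof will amount to citing them and checking that the hypotheses match.

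For the ``if'' direction (bounded functionality), I would invoke Theorem~\ref{thm:monotone-bounded} directly. That theorem says that any monotone class that does not contain at least one forest has bounded functionality. So if our finitely defined monotone class $X$ does not contain all forests, then there is some forest $F \notin X$, which is exactly the hypothesis of Theorem~\ref{thm:monotone-bounded}; hence $X$ has bounded functionality. Note that finite definability is not even needed for this direction.

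For the ``only if'' direction, I would prove the contrapositive: if $X$ contains all forests, then functionality is unbounded. This is precisely the content of the last bullet of Theorem~\ref{thm:five-classes}, which states that a finitely defined monotone class containing all forests has unbounded functionality (via superfactoriality, using the existence of $(C_3,\ldots,C_k)$-free graphs with $\Omega(n^{1+1/k})$ edges). Here both finite definability and monotonicity of $X$ are used, matching the hypotheses. Combining the two directions yields the equivalence.

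The main subtlety, rather than a genuine obstacle, is making sure the quantifiers align: ``does not contain all forests'' is the negation of ``contains all forests,'' and one must confirm that ``does not contain at least one forest'' in Theorem~\ref{thm:monotone-bounded} is literally the same condition as ``does not contain all forests.'' Both say there exists a forest outside $X$, so they coincide. Since everything reduces to invoking two prior results with matching hypotheses, no new calculation is required, and the proof is a short assembly. I would present it as: apply Theorem~\ref{thm:monotone-bounded} for one direction and the last case of Theorem~\ref{thm:five-classes} for the other.
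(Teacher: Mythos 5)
Your proposal is correct and matches the paper's own argument exactly: the paper presents this dichotomy as an immediate consequence of Theorem~\ref{thm:monotone-bounded} (for the direction where some forest is missing) combined with the last case of Theorem~\ref{thm:five-classes} (for the direction where all forests are present). No further comment is needed.
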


So far, we have identified some classes containing graphs of large functionality. However, presenting specific constructions of graphs of large functionality is 
a task, which is not so straightforward. We solve it in the following section.

\subsection{Constructing graphs of large functionality}

Since large VC-dimension implies large functionality, it would be natural to construct graphs of large functionality through constructing graphs of large VC-dimension.
The latter is an easy task. Indeed, consider the bipartite graph $D_n=(A,B,E)$ with two parts $|A|=n$ and $|B|=2^n$. For each subset $C\subseteq A$ we create a vertex in $B$
whose neighbourhood coincide with $C$. Clearly, the VC-dimension of $D_n$ is $n$ and hence with $n$ growing the functionality of $D_n$ grows as well. 

However, this example is not very interesting in the sense that $D_n$ contains vertices of low functionality (of low degree)  
and hence graphs of large functionality are hidden in $D_n$ as proper induced subgraphs. A much more interesting task is 
constructing graphs where {\it all} vertices have large functionality. In what follows, we show that this is the case for hypercubes.

Let $V_n=\{0,1\}^n$ be the set of binary sequences of length $n$ and let $v,w\in V_n$. The Hamming distance $d(v,w)$ between $v$ and $w$
is the number of positions in which the two sequences differ. 
A {\it hypercube} $Q_n$ is the graph with vertex set $V_n=\{0,1\}^n$, in which two vertices are adjacent if and only if the Hamming distance between them equals 1.

\begin{theorem}
Functionality of the hypercube $Q_n$ is at least $(n-1)/3$. 
\end{theorem}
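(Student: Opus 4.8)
The plan is to show that in the hypercube $Q_n$, every vertex requires many vertices to express it as a Boolean function, by exploiting the high degree of symmetry and the fact that all vertices look alike. Since $Q_n$ is vertex-transitive, it suffices to bound the functionality of a single vertex, say the all-zeros vertex $\mathbf{0}$, and conclude the same bound holds for every vertex; the theorem statement only claims a lower bound on $fun(Q_n)$, so in fact I only need one induced subgraph (namely $Q_n$ itself) in which every vertex has large functionality. Concretely, I would fix a vertex $y$ and a candidate set $x_1, \ldots, x_k$ with $k < (n-1)/3$, and exhibit two vertices $z, z'$ outside $\{y, x_1, \ldots, x_k\}$ that have identical adjacency pattern to all of $x_1, \ldots, x_k$ (that is, $A(x_i, z) = A(x_i, z')$ for every $i$) but differ in their adjacency to $y$ (that is, $A(y, z) \neq A(y, z')$). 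The existence of such a pair immediately contradicts $y$ being a function of $x_1, \ldots, x_k$, since any Boolean $f$ would have to output both values on the same input.

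The key combinatorial observation is that adjacency in $Q_n$ is extremely local: a vertex $z$ is adjacent to $y$ if and only if $d(y,z) = 1$, i.e. $z$ is obtained from $y$ by flipping exactly one coordinate. So the neighbours of $y$ are the $n$ vertices $y \oplus e_j$ for $j = 1, \ldots, n$ (using coordinate flips $e_j$). The plan is to find a neighbour $z = y \oplus e_j$ of $y$ and a non-neighbour $z'$ of $y$ that are indistinguishable by the set $\{x_1, \ldots, x_k\}$. Whether $x_i$ is adjacent to a given vertex $w$ depends only on whether $d(x_i, w) = 1$; so the ``test'' each $x_i$ applies to $w$ is just checking if $w$ lies in the Hamming ball of radius $1$ around $x_i$. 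I would count how many coordinates are ``constrained'' by requiring $z$ and $z'$ to agree on all these locality tests. Each $x_i$ can distinguish $z$ from $z'$ only if flipping the relevant coordinates moves one of them into or out of the unit ball around $x_i$; the number of coordinates that matter for this is controlled, and with $k$ chosen vertices one constrains at most a bounded multiple of $k$ coordinates (this is where the factor of $3$ and the relation $k < (n-1)/3$ enter).

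The central construction I would attempt: take $z = y \oplus e_j$ for some coordinate $j$, which is a neighbour of $y$, and take $z' = y \oplus e_j \oplus e_{j'} \oplus e_{j''}$ for two further coordinates $j', j''$, so that $d(y, z') = 3 \neq 1$, making $z'$ a non-neighbour of $y$. For each $x_i$, I need $x_i$'s adjacency to $z$ and to $z'$ to coincide. Since $z$ and $z'$ differ in exactly the two coordinates $j', j''$, the vertex $x_i$ can only tell them apart if the coordinate pair $\{j', j''\}$ interacts with whether $d(x_i, \cdot) = 1$; analysing this shows each $x_i$ forbids at most a constant number (around $3$) of choices of the triple $(j, j', j'')$, or equivalently forbids a bounded set of coordinates. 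With fewer than $(n-1)/3$ vertices $x_i$, fewer than $n-1$ coordinates are excluded, leaving enough freedom to choose $j, j', j''$ making $z, z'$ indistinguishable by every $x_i$ while $A(y,z) = 1 \neq 0 = A(y,z')$.

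The main obstacle I anticipate is the careful bookkeeping in the counting step: precisely quantifying, for each potential ``separator'' $x_i$, exactly which coordinate choices it rules out, and verifying that the total number of ruled-out coordinates is strictly less than $n - 1$ when $k < (n-1)/3$. The subtlety is that a single $x_i$ at the ``right'' Hamming distance from $y$ could be adjacent to one of $z, z'$ but not the other, and I must show each such $x_i$ imposes only a constant number of forbidden coordinate positions (not growing with $n$), so that the union over all $k$ vertices still leaves a valid choice. Getting the constant right — so that the bound comes out as $(n-1)/3$ rather than a weaker fraction — is the delicate part; the locality of hypercube adjacency and the fact that $z, z'$ differ in only two coordinates while $y$-adjacency is governed by a single coordinate flip are the features I would lean on to keep each vertex's ``cost'' bounded by roughly $3$.
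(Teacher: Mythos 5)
Your overall strategy is the same as the paper's: exploit the locality of hypercube adjacency to find, by a pigeonhole argument on coordinates, two test vertices that every candidate set $S$ of size $k$ fails to separate even though they have different adjacency to $y$. However, there is a genuine gap at exactly the point you flag as ``the delicate part'': the counting is never carried out, and with the construction you chose it does not deliver the constant $1/3$. Taking $z=y\oplus e_j$ and $z'=y\oplus e_j\oplus e_{j'}\oplus e_{j''}$ (so $d(y,z')=3$), a parity check shows that the vertices of $S$ able to distinguish $z$ from $z'$ are exactly certain vertices at Hamming distance $2$ or $4$ from $y$ (a weight-$2$ vertex whose support contains $j$, the weight-$2$ vertex with support $\{j',j''\}$, and any weight-$4$ vertex whose support contains all of $j,j',j''$), plus you must ensure $z,z'\notin S$. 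Under the natural ``forbidden coordinates'' bookkeeping, a weight-$4$ vertex forbids $4$ coordinates, not ``around $3$'', and you need $3$ free coordinates rather than $2$; this yields $|S|\ge (n-2)/4$, weaker than the claimed bound. (Your construction can in fact be rescued, and even improved, by counting forbidden \emph{pairs and triples} instead of coordinates, but that is a different argument from the one you sketch, and you would have to do it explicitly.)

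The paper avoids all of this by choosing the two test vertices at distances $1$ and $2$ from $y=0^n$, namely $u=e_i$ and $w=e_i+e_j$, which differ in a single coordinate. Then any vertex of $S$ adjacent to $u$ or to $w$ must have weight at most $3$, so it suffices to pick $i,j$ outside the union of the supports of all vertices of $S$ of weight $1$, $2$ or $3$; each such vertex occupies at most $3$ coordinates and only $2$ coordinates need to be free, so if $|S|<(n-1)/3$ the union of supports misses two positions $i,j$, and then no vertex of $S$ is adjacent to either $u$ or $w$ (low-weight vertices differ from both in position $i$ and in some position of their own support; vertices of weight at least $4$ are too far away), while $u\sim y$ and $w\nsim y$. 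That single-coordinate difference between the two test vertices is precisely what makes every potential distinguisher cost at most $3$ and produces the bound $(n-1)/3$; to complete your proof you would need either to switch to this distance-$(1,2)$ pair or to carry out honestly the pair/triple count for your distance-$(1,3)$ pair and accept whatever constant it gives.
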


\begin{proof}
By symmetry, it suffices to show that the vertex $v=00 \ldots 0 \in V_n$ has functionality at least $(n-1)/3$. 
Let $v$ be a function of vertices in a set $S \subseteq V_n \backslash \{v\}$. To provide a lower bound on the size of $S$, and hence a lower bound
on the functionality of $v$, for each $i=1, 2, \ldots, n$ consider the set $S_i=\{w \in S: d(w, v)=i\}$, i.e. the set of all binary sequences in $S$ that contain exactly $i$ 1s. 
Also, consider the following set: 
$$I=\{i \in \{1,2, \dots, n\} : \exists z = z_1z_2 \ldots z_n \in S_1 \cup S_2 \cup S_3 \  \mbox{with} \  z_i=1\}.$$
Suppose $|I|\le n-2$. Then there exist two positions $i$ and $j$ such that for any sequence $z = z_1z_2 \ldots z_n \in S_1 \cup S_2 \cup S_3$, we have $z_i=0$ and $z_j=0$. Consider the following two vertices: 
\begin{itemize}
\item $u=u_1u_2\ldots u_n$ with $u_k=1$ if and only if $k=i$,
\item $w=w_1w_2 \ldots w_n$ with  $w_k=1$ if and only if $k=i$ or $k=j$.   
\end{itemize} 

We claim that $u$ and $w$ are not adjacent to any vertex $z \in S$. 
First, it is not hard to see that for any $z \in S_1 \cup S_2 \cup S_3$ we have $d(z, u) \geq 2$ and $d(z, w) \geq 2$. 
Indeed, any $z \in S_1 \cup S_2 \cup S_3$ differs from $u$ and $w$ in position $i$, i.e. $z_i=0$ and $u_i=w_i=1$, and there must exist a $k\ne i,j$ with $z_k=1$ and $u_k=w_k=0$.
Also, it is not difficult to see that $d(z, u) \geq 2$ and $d(z, w) \geq 2$ for any vertex $z \in S \backslash (S_1 \cup S_2 \cup S_3)$, because any such $z$ has at least four 1s,
while $u$ and $w$ have at most two 1s. Therefore, by definition, $u$ and $w$ are not adjacent to any vertex in $S$.   

We see that the assumption that $|I|\le n-2$ leads to the conclusion that there are two vertices $u, w \in Q_n \backslash (S \cup \{v\})$ 
which are non-adjacent to any vertex in $S$, but have different adjacencies to $v$. This contradicts the fact that $v$ is a function of the vertices in $S$. 
So, we must conclude that $I$ has size at least $n-1$. As each vertex in $S_1 \cup S_2 \cup S_3$ has at most three 1s, 
we conclude that $S_1 \cup S_2 \cup S_3$ must contain at least $|I|/3=(n-1)/3$ vertices. This completes the proof of the theorem.
\end{proof}

We conclude this section by observing that the hereditary closure of the set of hypercubes, i.e. the hereditary class containing 
all the hypercubes and all their induced subgraphs, is one more example of a hereditary class of unbounded functionality and bounded VC-dimension. 
The difference between this example and the classes in Theorem~\ref{thm:five-classes} is that the speed of the hereditary closure of hypercubes 
is an open question. We discuss some other open questions related to the topic of the paper in the concluding section.

%%%%%%%%%%%%%%%%%%%%%%%%%%%%
%%%%%%%%%%%%%%%%%%%%%%%%%%%%

\section{Concluding remarks and open problems}
\label{sec:conclusion}
%%%%%%%%%%%%%%%%%%%%%%%%%%%%
%%%%%%%%%%%%%%%%%%%%%%%%%%%%

In this paper, we proved a number of results about graph functionality. However, many questions on this topic remain unanswered.  

\subsection{Bounded functionality, implicit representation and factorial properties of graphs}

Let us repeat that any hereditary class of graphs of bounded functionality is at most factorial \cite{implicit}.
It is natural to ask whether all factorial classes are of bounded functionality. 
\begin{problem}
Is it true that for any hereditary class with at most factorial speed of growth there exists a constant bounding the functionality of graphs in the class? 
\end{problem}
To emphasize the importance of the family of factorial classes let us mention that it contains many classes of theoretical or practical importance,
such as line graphs, interval graphs, permutation graphs, threshold graphs, forests, 
planar graphs and, even more generally, all proper minor-closed graph classes, 
all classes of graphs of bounded vertex degree, of bounded clique-width, etc. 

There is one more important notion associated with factorial classes of graphs, namely, the notion of  implicit representation of graphs,
which was introduced in \cite{implicit-0} and then further developed in \cite{implicit-1}. Similarly to bounded functionality, any hereditary class that admits 
an implicit representation is at most factorial. However, the question whether all factorial classes admit implicit 
representations, also known as {\it implicit graph representation conjecture}, is widely open.
We ask whether there is any relationship between the two notions. 
\begin{problem}
Does implicit representation implies bounded functionality and/or vice versa? 
\end{problem}

\subsection{Other open questions}

We conclude the paper with a number of other open questions related to the notion of graph functionality. Some of them are motivated by the results presented in the paper.
We list them in no particular order. The first of them is inspired by a result in \cite{implicit} showing that if the family of prime (with respect to modular decomposition) graphs 
in a hereditary class $X$ is factorial, then the entire class $X$ is factorial.

\begin{problem}
Is it true that if prime (with respect to modular decomposition) graphs 
in a hereditary class $X$ have bounded functionality, then all graphs in $X$ have bounded functionality?
\end{problem}

\begin{problem}
Describe explicitly any function satisfying $vc(G)\le f(fun(G))$.  
\end{problem}

\begin{problem}
Are there any minimal hereditary classes of unbounded functionality?  
\end{problem}

\begin{problem}
Is functionality bounded for interval graphs, or more generally, for graphs of bounded boxicity?  
\end{problem}

\begin{problem}
What is the time complexity of computing the functionality of a graph?  
\end{problem}

\begin{problem}
Are there NP-hard problems that admit polynomial-time or fixed-parameter tractable algorithms for graphs of bounded functionality? 
\end{problem}

\end{document}